\documentclass[11pt]{article}
\usepackage{latexsym}
\usepackage{amssymb}
\usepackage{graphicx}
\usepackage{enumerate}
\usepackage{amssymb}
\usepackage{amsmath}
\usepackage{color}
\usepackage{tikz}
\usepackage{hyperref}
\hypersetup{hypertex=true,
	colorlinks=true,
	linkcolor=blue,
	anchorcolor=blue,
	citecolor=blue}
\usepackage{amsthm}
\usepackage{cleveref}
\crefformat{section}{\S#2#1#3} 
\crefformat{subsection}{\S#2#1#3}
\crefformat{subsubsection}{\S#2#1#3}
\def\osc{{\mathrm{osc}}}

\def\l{\lambda}

\def\dvol{{\mathrm{dvol}}}
\def\ep{\epsilon}

\numberwithin{equation}{section}
\theoremstyle{plain}
\newtheorem{thm}{Theorem}[section]
\newtheorem{lem}[thm]{{{L}}emma}
\newtheorem{cor}[thm]{{C}orollary}
\newtheorem{prop}[thm]{{P}roposition}

\newtheorem{defn}[thm]{{D}efinition}

\def\XXint#1#2#3{{\setbox0=\hbox{$#1{#2#3}{\int}$}
		\vcenter{\hbox{$#2#3$}}\kern-.5\wd0}}

\newcommand{\be}{\begin{equation}}
	\newcommand{\ee}{\end{equation}}
\newcommand{\ba}{\begin{aligned}}
	\newcommand{\ea}{\end{aligned}}
\newcommand{\Ric}{\operatorname{Ric}}
\newcommand{\Rm}{\operatorname{Rm}}
\newcommand{\Hess}{\operatorname{Hess}}
\newcommand{\Vol}{\operatorname{Vol}}

\newcommand{\inj}{\operatorname{inj}}
\newcommand{\dd}{\,\mathrm{d}}

\title{The Weyl Law Meets Large-Scale Regularity  on Ricci Shrinkers}
\author{ Junrong Yan\footnote{Department of Mathematics, Northeastern University, Boston, MA, 02215, j.yan@northeastern.edu}
}
\date{}

\begin{document}
	\maketitle
	\begin{abstract}
		
		We prove that the weighted Laplacian, or equivalently its conjugate
		Schr\"odinger operator, on every complete gradient Ricci shrinker satisfies
		the classical Weyl law.
		
	The main difficulty is that uniform bounded geometry is not known for general Ricci shrinkers. To overcome this, we establish a large-scale regularity property for complete gradient Ricci shrinkers and apply it to the spectral asymptotics of the weighted Laplacian. We prove that, inside large geodesic balls of radius $R$, the region where the curvature radius is smaller than  $R^{-1}$ occupies an asymptotically negligible proportion of the volume. The proof uses the Ricci flow associated with the shrinker, together with the curvature-radius estimates and Sobolev inequalities of Li--Wang.

	\end{abstract}
	
	\section{Introduction}

	\subsection{Overview}
	
	Let $(M,g,e^{-f}\dvol_g)$ be a weighted Riemannian manifold.  After a unitary
	conjugation, its weighted Laplacian becomes a Schr\"odinger operator on
	$L^2(M,\dvol_g)$ of the form
	$$
	H=\Delta+V,
	\qquad
	V\in C^\infty(M).
	$$
	When $V\to+\infty$, the natural candidate for the leading asymptotic of its
	eigenvalue counting function is
	\be\label{eq:intro-general-Weyl}
	N_H(\lambda)
	\sim
	(2\pi)^{-n}\omega_n
	\int_M(\lambda-V(x))_+^{n/2}\,\dvol_g(x).
	\ee
	On a noncompact manifold, discreteness of the spectrum alone does not imply
	\eqref{eq:intro-general-Weyl}: the interaction between the potential and the
	geometry at infinity may change even the leading term; see \cite[\S5]{BDY}
	for counterexamples.
	
	For a normalized gradient Ricci shrinker,
	$$
	\Ric+\nabla^2f=\frac12g,
	\qquad
	R+|\nabla f|^2=f,
	$$
	the weighted Laplacian is conjugate to
	$$
	H_f=\Delta+V_f,
	\qquad
	V_f=\frac14(f+R-n).
	$$
	The favorable feature is that the potential is well controlled: the estimates of Cao--Zhou \cite{CZ}, together
	with subsequent refinements, give quadratic growth of $f$ and control of the
	volumes of its sublevel sets.  The spectrum is therefore discrete.  Our main result is that the expected classical asymptotic nevertheless holds,
	even without uniform geometric control of the shrinker:
	\[
	N_f(\lambda)
	\sim
	(2\pi)^{-n}\omega_n
	\int_M\left(\lambda-\frac14(f+R-n)\right)_+^{n/2}\,\dvol_g,
	\qquad \lambda\to+\infty.
	\]

	To explain our approach to establishing the Weyl law, and why we need such
	large-scale regularity, recall that, in joint work with Braverman and Dai
	\cite{BDY}, we established a criterion for the classical Weyl law for
	Schr\"odinger operators on complete Riemannian manifolds.  That criterion balances the local geometry, including curvature
	and injectivity-radius bounds, against the growth and oscillation of the
	potential.  When this balance fails, the Weyl law may fail
	\cite[$\S$5.1, $\S$5.2]{BDY}.  Applied directly here, it would ask for the
	uniform estimate
	\be\label{eq: expected}
	\sqrt{\lambda}\,
	\inf_{\{V_f<C\lambda\}}\rho
	\longrightarrow\infty,
	\ee
	where $\rho$ is the curvature radius defined in \eqref{eq:defn rho}.  Such an
	estimate is not known and is stronger than what the Weyl law requires.
	We instead prove that, for every fixed $A,C>0$,
	\be\label{eq: overview of result}
	\Vol_g\{x\in B_g(p,CR):r_{\Rm}(x)\leq AR^{-1}\}
	=o\Big(
	\Vol_g\big(B_g(p,R)\big)
	\Big),
	\qquad
	R\to\infty.
	\ee
	Here $r_{\Rm}$ is the curvature radius associated with the canonical Ricci
	flow, $p\in M$ is fixed, and $B_g(p,R)$ is the geodesic ball of radius $R$.

	We place this estimate in the context of geometric control for Ricci shrinkers.
	Munteanu--M.-T.~Wang \cite{MunteanuMuTaoWang} established integral curvature
	estimates for gradient shrinkers and proved that bounded Ricci curvature
	forces the Riemann curvature tensor to grow at most polynomially.  In dimension
	four, Munteanu--Wang \cite{MunteanuJiapingWang} proved the much stronger
	estimate
	$
	|\Rm|\leq C R
	$
	under the assumption that the scalar curvature is bounded.  Their work also
	gives detailed information on the geometry at infinity; see further
	\cite{MunteanuWangInfinity}.  Related rigidity results for prescribed
	asymptotically conical geometry were obtained by Kotschwar--Wang
	\cite{KotschwarWang}.  Most recently, Li--Wang proved that every K\"ahler Ricci
	shrinker surface has bounded sectional curvature and obtained a complete
	classification \cite{LiWangKahler}.
	
	A different development treats Ricci shrinkers without imposing curvature
	bounds in advance.  Bamler study $\mathbb F$-compactness and structure theory for
	Ricci flows
	\cite{BamlerStructure,BamlerCompactness}.  In the setting of flows associated
	with shrinkers, Li--Wang established heat-kernel estimates,
	curvature-radius comparison, and quantitative regularity results
	\cite{LW20,LWheat}.  Related structure results for noncollapsed shrinkers and
	their limit spaces were obtained by Li--Li--Wang
	\cite{LiLiWangStructure} and Huang--Li--Wang \cite{HuangLiWang}.  
	
	The spectral geometry of Ricci shrinkers has also developed through the study of their first few eigenvalues.  The Bakry--\'Emery curvature
	identity gives the sharp lower bound
	$\lambda_1\geq\frac12$, and equality is closely tied to Euclidean splitting;
	see Hein--Naber \cite{HeinNaber} and Cheng--Zhou \cite{ChengZhouDrifted}.
	Colding--Minicozzi \cite{ColdingMinicozziGrowth,
		ColdingMinicozziModifiedFlow} developed polynomial-growth estimates for
	drift eigenfunctions and related splitting results.   More recently, Li--Zhang--Zhang
	\cite{LiZhangZhangEigenvalues} and Zhang
	\cite{ZhangEigenvalueSplitting} established quantitative versions of this
	phenomenon, relating eigenvalues close to $1/2$ to almost splitting of the
	shrinker.  He \cite{He} obtained upper and lower estimates for higher
	eigenvalues in terms of the volume growth of $f$-sublevel sets, and related
	estimates for Schr\"odinger and drifted Schr\"odinger operators were recently
	obtained in \cite{ChengConradoPinheiroZhou}. 
	There is also a substantial literature connecting this spectral theory with
	harmonic and holomorphic functions of controlled growth \cite{MaiOuLiouville,WuWuPolynomialGrowth,HeOuHolomorphic,HeOuSections}.
	
	These results concern the relation between individual eigenvalues and the geometry and rigidity of Ricci shrinkers.
	The present work concerns the complementary
	part.  We show that the full eigenvalue counting function has the
	classical leading asymptotic on every complete shrinker.

	\subsection{Notation and proof strategy}
	
	Let $(M^n,g,f)$ be a complete gradient shrinking Ricci soliton normalized by
	\be\label{eq:shrinker}
	\Ric_g+\Hess_g f=\frac12g,
	\qquad
	R_g+|\nabla f|_g^2=f.
	\ee
	Here $\Ric_g$, $R_g$, $|\cdot|_g$, and $\Hess_g$ denote the Ricci curvature
	tensor, the scalar curvature, the pointwise norm, and the Hessian induced by
	$g$, respectively.  We use $\Vol_g$, $\dvol_g$, and $d_g$ for the
	corresponding volume, volume measure, and distance, and omit the subscript
	$g$ when no confusion can arise.
	
	We use the convention that the Laplace--Beltrami operator
	$\Delta=-\operatorname{div}\nabla$ is nonnegative.  The weighted Laplacian on
	$L^2(M,e^{-f}\dvol_g)$ is
	\[
	L_f=\Delta+\nabla_{\nabla f}.
	\]
	Conjugation by multiplication with $e^{-f/2}$ transforms $L_f$ into the
	Schr\"odinger operator
	\be\label{eq:intro Hf}
	H_f=\Delta+V_f,
	\qquad
	V_f=\frac14(f+R-n);
	\ee
	see \eqref{eq:conjugation}.  The spectrum of $L_f$, or equivalently of
	$H_f$, is discrete; see, for example,
	\cite{HeinNaber,ChengZhouDrifted}.  We write its eigenvalues, repeated
	according to multiplicity, as
	\[
	0=\lambda_0\leq\lambda_1\leq\cdots
	\]
	and set
	\[
	N_f(\lambda):=\#\{j:\lambda_j<\lambda\}.
	\]
	The corresponding classical phase-space volume is
	\[
	\Phi_f(\lambda)
	:=
	(2\pi)^{-n}\omega_n
	\int_M(\lambda-V_f)_+^{n/2}\,\dvol_g,
	\]
	where $\omega_n$ denotes the volume of the Euclidean unit ball and
	$x_+:=\max\{x,0\}$.  The Weyl law is therefore to determine whether
	\[
	N_f(\lambda)\sim\Phi_f(\lambda),
	\qquad \lambda\to+\infty.
	\]
	We next introduce the geometric scale used in the statement of the main result
	below.
	For $x\in M$, define
	\be\label{eq:defn rho}
	\begin{aligned}
		\rho(x):=\sup\Bigl\{r\in(0,+\infty)\ \Big|\ 
		& r^{2+k}|\nabla^k\Rm_g|(y)\leq1,\quad k=0,1,2\\
		& \inj_g(y)>r,
		\qquad \forall\,y\in B_g(x,r)
		\Bigr\}.
	\end{aligned}
	\ee
	Thus $\rho(x)$ records a scale on which the curvature is controlled and on which the injectivity radius
	does not collapse.  The injectivity-radius condition is essential for the
	Weyl asymptotics; without it, counterexamples occur even when the
	curvature is controlled; see \cite[\S5.1]{BDY}.
	
	We also introduce the sublevel-volume functions
	\[
	F(\lambda):=\Vol_g\{f<\lambda\},
	\qquad
	\sigma(\lambda):=\Vol_g\{V_f<\lambda\}.
	\]
	As explained in the overview, a direct application of the
	criterion  in \cite{BDY} in shrinkers' setting then would require, for every fixed $C>0$,
	\be\label{eq:expected to hold}
	\sqrt{\lambda}\,
	\inf_{x\in\{V_f<C\lambda\}}\rho(x)
	\longrightarrow+\infty.
	\ee
	Such a uniform estimate is not known for general complete Ricci shrinkers. The
	geometric input replaces \eqref{eq:expected to hold} with the following. For every fixed $A,C>0$, we prove
	\[
	\Vol_g\bigl\{
	V_f<C\lambda,\ \sqrt{\lambda}\rho\leq A
	\bigr\}
	=
	o\bigl(\sigma(\lambda)\bigr).
	\]
	The above estimate is equivalent to \eqref{eq: overview of result}.  In low
	dimensions it follows directly from the curvature-radius
	integral estimate of Li--Wang together with the growth of $F(\lambda)$.
	In higher dimensions we combine Li--Wang's spacetime curvature-radius
	estimate with their global Sobolev inequality on Ricci shrinkers.  The details are given in
	\Cref{sec:mainprooflow} and \Cref{sec: bad set}.

	\subsection{Main results}
	
	The main result of the paper is the following Weyl law.
	
	\begin{thm}[Weyl law]
		\label{thm:main}
		Let $(M^n,g,f)$ be a complete gradient Ricci shrinker. Then
		\be\label{eq:Weyl law with scalar curvature}
		N_f(\lambda)
		\sim
		(2\pi)^{-n}\omega_n
		\int_M(\lambda-V_f)_+^{n/2}\,\dvol_g,
		\qquad \lambda\to+\infty.
		\ee
	\end{thm}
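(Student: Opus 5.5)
We prove \Cref{thm:main} by Dirichlet--Neumann bracketing on a covering adapted to the curvature radius $\rho$, with the badly controlled region handled as an error. Since $N_f$ counts the eigenvalues of $H_f=\Delta+V_f$, it suffices to prove upper and lower bounds $(1\pm\epsilon)\Phi_f(\lambda)$ for large $\lambda$, with $\epsilon>0$ arbitrary.

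\textbf{Potential and volume.} First we record what the estimates of Cao--Zhou give about the potential: $f$ grows quadratically, $|\nabla f|^2\le f$, and $0\le R\le f$. Hence $V_f$ is comparable to $\frac14 d(p,\cdot)^2$, and on $\{V_f<C\lambda\}$ we have $|\nabla V_f|\lesssim\sqrt{\lambda}$ away from $\nabla R$. Using the polynomial volume growth of $F$, we check that
\[
\Phi_f(\lambda)\asymp\lambda^{n/2}\sigma(\lambda).
\]
We also check a doubling-type property, $\sigma(C\lambda)\le C'\sigma(\lambda)$; in particular $\sigma(C\lambda)=O(\sigma(\lambda))$. This property follows from the volume growth estimates.

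\textbf{Splitting the region.} Fix a large $A$. Split $\{V_f<C\lambda\}$ into a good set $G=\{\sqrt{\lambda}\,\rho>A\}$ and a bad set $B=\{\sqrt\lambda\,\rho\le A\}$. By the geometric input stated in the introduction (the estimate equivalent to \eqref{eq: overview of result}, proved later from the Li--Wang curvature-radius estimates and Sobolev inequality), $\Vol(B)=o(\sigma(\lambda))$. The region $\{V_f\ge C\lambda\}$ contributes nothing to the count asymptotically once $C>1$ is chosen suitably, via a Dirichlet or IMS localization there.

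\textbf{Good set.} Cover $G$ by balls of radius $r=\delta\lambda^{-1/2}$, where $\delta$ is small but $A\gg\delta$. On each ball the metric is $C^{1,\alpha}$-close to Euclidean in harmonic or exponential coordinates, since curvature and injectivity radius are controlled at scale $\rho\ge A\lambda^{-1/2}$. The potential also oscillates by $O(\delta\lambda)$ on each ball, using $|\nabla V_f|\lesssim\sqrt\lambda$. We need a bound on $|\nabla R|$ here; on $G$ it is controlled by the $k=1$ bound in $\rho$ at scale $\ge A\lambda^{-1/2}$, though only with weight $\rho^{-3}$, so the oscillation of $R$ over the ball is handled through $R\le C\lambda$ together with smallness of $\delta$. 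On each ball we compare with Euclidean Dirichlet/Neumann counts of $\Delta+\text{const}$, which by the classical Weyl law are $(2\pi)^{-n}\omega_n(\lambda-V_f)_+^{n/2}\Vol(\text{ball})(1+O(\delta)+o(1))$ uniformly in the ball. Summing gives the main term up to $\epsilon\Phi_f$. Equivalently, we apply the localized criterion of \cite{BDY} restricted to $G$.

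\textbf{Bad set.} The main obstacle is the upper bound on $B$: Neumann counts on regions of poor geometry can be large, and one cannot simply bracket there. First we try an IMS partition $\chi_G^2+\chi_B^2=1$ with $|\nabla\chi|\lesssim\sqrt\lambda/A$, costing only $O(\lambda/A^2)$ in energy. Then we bound $N(H_f\chi_B\text{-part},\lambda)$ with a CLR/Cwikel-type estimate derived from the Li--Wang global Sobolev inequality on shrinkers. That inequality holds with the potential $R$ (or $f$) built in, and gives a count bounded by
\[
C\int_{\tilde B}(2\lambda-V_f)_+^{n/2}\le C\lambda^{n/2}\Vol(\tilde B)=o(\Phi_f(\lambda)),
\]
where $\tilde B$ is a neighbourhood of $B$ of width $\sim A\lambda^{-1/2}$; the smallness $\Vol(\tilde B)=o(\sigma(\lambda))$ again follows from \eqref{eq: overview of result} applied with a slightly larger $A$. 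For $n\le2$ the Sobolev/CLR step fails and must be replaced by a direct argument (for instance a Lieb--Thirring-type bound with logarithmic loss, or taking products with a flat torus). The lower bound needs only Dirichlet bracketing on $G$, so it is easier.
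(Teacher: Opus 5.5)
Your overall architecture is workable, but it differs from the paper's, and one step fails as written: the good-set step.

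\textbf{Where your route differs.} The paper never localizes the operator. It writes $N_f(\lambda)=\int_M e_\lambda(x,x)\,\dvol_g$, bounds $e_\lambda(x,x)\le C\lambda^{n/2}$ everywhere (Li--Wang Sobolev, then Nash, then a Gaussian heat-kernel bound), and integrates this bound over the bad set. On the good set it gets pointwise asymptotics of $e_\lambda(x,x)$ from a heat-kernel parametrix and a Tauberian theorem (\Cref{lem:local-density}). It removes $\{V_f>(1+\delta)\lambda\}$ with Agmon and high-$R$ estimates.

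Your bad-set treatment is a sound substitute for the paper's pointwise bound. Because $\rho$ is $1$-Lipschitz, $\chi_B=\eta(\sqrt\lambda\rho/A)$ has $|\nabla\chi_B|\lesssim\sqrt\lambda/A$ and support in $\{\sqrt\lambda\rho\le 2A\}$. A CLR bound for $4\Delta+R+(f-n-4\lambda-4W)$ with Dirichlet conditions then follows from an abstract CLR theorem for the ultracontractive sub-Markovian semigroup $e^{-t(4\Delta+R)}$. Your IMS/Dirichlet treatment of $\{f\ge C\lambda\}$ is simpler than the paper's Agmon argument.

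\textbf{The gap: the cell scale.} A ball of radius $r=\delta\lambda^{-1/2}$ with $\delta$ small has first Dirichlet eigenvalue and first nonzero Neumann eigenvalue of order $\delta^{-2}\lambda\gg\lambda$. So each ball has Dirichlet count $0$ and Neumann count $1$, while the Weyl term per ball is of order $\delta^n$. Dirichlet bracketing therefore gives the lower bound $0$, and Neumann bracketing overcounts by a factor of about $\delta^{-n}$. Your oscillation bound $O(\delta\lambda)$ actually corresponds to radius $\delta\sqrt\lambda$, where the geometry is not controlled at all.

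The cells must contain many wavelengths and also sit well inside the regularity scale. That means $r=A'\lambda^{-1/2}$ with $1\ll A'\ll A$. Then the Dirichlet--Neumann discrepancy is $O(1/A')$ relative to the Weyl term on $\{V_f\le(1-\delta)\lambda\}$, and the metric is $(1+O(A'^2/A^2))$-bi-Lipschitz to Euclidean on each cell. This is exactly why the paper needs $\inf_{G_\lambda}\sqrt\lambda\,\rho\to\infty$ in \Cref{lem:local-density}.

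At that scale the potential is harmless. One has $\osc f\lesssim\sqrt\lambda\, r=A'$, and on $G$ one has $R\le C\rho^{-2}\le C\lambda/A^2$. Your appeal to $R\le C\lambda$ plus small $\delta$ gives no smallness.

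\textbf{Other points to fix.} A cover by balls gives neither bracketing inequality. For the lower bound you need disjoint Dirichlet cells filling all but an $o(1)$ fraction of the good region, for example Voronoi cells of an $r$-net subdivided into cubes in almost-Euclidean charts. For the upper bound you need either a genuine partition with uniform Neumann bounds, or an IMS partition with transition layers of relative width $\theta$. The latter costs $\lambda/(\theta A')^2$ in energy and $O(\theta)$ in overlap.

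The ``localized criterion of \cite{BDY}'' does not exist off the shelf; \Cref{lem:local-heat-stability} and \Cref{lem:local-density} are what supply it.

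For $n\le 2$, a product with a flat torus is not a shrinker, so Li--Wang's inequality is unavailable there. The paper instead uses the classification (Gaussian line and plane, round $S^2$ and $\mathbb{RP}^2$), all of which have bounded geometry.
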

	
	The key geometric input is the large-scale regularity:
	
	\begin{thm}[Large-scale regularity]
		\label{thm:estimate}
		Let $(M^n,g,f)$ be a complete gradient Ricci shrinker.  Then, for every fixed
		$A,C>0$,
		\[
		\Vol_g\bigl\{
		f<C\lambda,\ \sqrt{\lambda}\rho\leq A
		\bigr\}
		=
		o\bigl(F(\lambda)\bigr),
		\qquad \lambda\to+\infty.
		\]
		By \Cref{prop:radius-comparison}, \eqref{eq:doubling} in \Cref{prop:coarea},
		and \cite[Theorem 1.1]{CZ}, this is equivalent to
		\[
		\Vol_g\{x\in B_g(p,CR):r_{\Rm}(x)\leq A R^{-1}\}
		=o\Big(
		\Vol_g\big(B_g(p,R)\big)
		\Big),
		\qquad
		R\to\infty.
		\]
		Here $p\in M$ is fixed and $B_g(p,R)$ denotes the geodesic ball of radius
		$R$ centered at $p$.
	\end{thm}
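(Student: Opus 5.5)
We work with the canonical Ricci flow $g(t)=(1-t)\varphi_t^*g$, $t<1$, associated with the shrinker, and with Li--Wang's curvature radius $r_{\Rm}$. By \Cref{prop:radius-comparison} we may replace $\rho$ by $r_{\Rm}$ up to fixed constants; a bad point with $\sqrt\lambda\rho\le A$ then has $r_{\Rm}\le A'\lambda^{-1/2}$. By the quadratic growth of $f$ \cite[Theorem 1.1]{CZ} we have $\{f<C\lambda\}\subset B(p,2\sqrt{C\lambda}+c)$. Writing $R=\sqrt\lambda$, the statement therefore amounts to showing that the set $E_R:=\{x\in B(p,C'R):\ r_{\Rm}(x)\le A'R^{-1}\}$ has volume $o(F(\lambda))$. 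Throughout we use the doubling \eqref{eq:doubling} of \Cref{prop:coarea} to compare $F(C\lambda)$ with $F(\lambda)$, and hence with $\Vol B(p,R)$. This comparison also gives the equivalence of the two formulations stated in the theorem.

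\textbf{Low dimensions.} The plan is to use Li--Wang's integral curvature-radius estimate, which states that $\int_{B(q,r)} r_{\Rm}^{-s}$ is bounded by $C r^{n-s}$ for every $s<4$; in the noncollapsed/local sense this holds for every $s<4$ at unit scale. We cover $B(p,C'R)$ by unit balls. In each unit ball, Chebyshev's inequality gives
\[
\Vol\{r_{\Rm}\le A'R^{-1}\}\le (A'/R)^s\int r_{\Rm}^{-s}\lesssim R^{-s}\times\text{(volume)}.
\]
Summing over the cover, $\Vol(E_R)\lesssim R^{-s}\Vol B(p,C'R+1)$, which is $o(F(\lambda))$. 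One can also exploit the scaling of the flow: a point at distance $\sim R$ from $p$ corresponds, at time $t=1-R^{-2}$, to a point at bounded distance, and this is where a fixed-scale estimate becomes a scale-$R^{-1}$ estimate. Since $s>0$ may be taken arbitrarily small, this argument in fact works in every dimension, provided the unit-scale integral bound holds uniformly in position on the shrinker with a constant independent of the location. That uniformity is the subtle point.

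\textbf{General case and main obstacle.} In high dimensions, Li--Wang's estimates are spacetime estimates whose constants depend on the local volume ratio $\Vol B(q,1)$. Without bounded geometry this ratio may degenerate at infinity, although it is controlled from below by $e^{\mu}$ via the logarithmic Sobolev inequality. I would proceed as follows.
\begin{enumerate}[(i)]
\item Apply the spacetime estimate on parabolic neighbourhoods $B(q,1)\times[-1,0]$ of the flow. This gives the bound $|\{r_{\Rm}<\epsilon\}\cap B(q,1)|\le C\epsilon^{4-\delta}\,|B(q,1)|$, with $C$ depending only on $n$ and the entropy $\mu$, which is uniform over the shrinker.
\item Translate this bound back to time $0$ using the self-similarity $g(t)=(1-t)\varphi_t^*g$. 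A point of $f$-level $\sim\lambda$ at time $0$ is mapped by $\varphi_t$ to a bounded region when $1-t\sim\lambda^{-1}$, and the curvature radius rescales by the factor $\sqrt{1-t}$.
\item Sum over a covering, controlling the overlap by the volume doubling together with the global Sobolev inequality, which bounds the number of unit balls needed relative to the volume.
\end{enumerate}
I expect the main obstacle to be step (ii), namely matching the time slices and showing that the bad set at time $0$ is captured by the bad set at time $1-\lambda^{-1}$ without any loss. If this proves too delicate, the fallback is to apply the Sobolev inequality to a cutoff function of $r_{\Rm}$ and so obtain a direct volume bound.
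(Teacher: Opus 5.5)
There is a genuine gap. Nothing in your plan gives a lower volume bound at the scale where Li--Wang's estimate is applied, and without one you cannot get a bound relative to $F(\lambda)$.

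Li--Wang's curvature-radius estimate (\Cref{prop:LW-codimension-four}) is an \emph{absolute} spacetime bound, $\iint_{P^*(x_0,t_0;r)}r_{\Rm}^{-q}\le Cr^{n+2-q}$, over heat-kernel neighbourhoods. It is neither proportional to $\Vol B(q,1)$ nor a time-slice bound with exponent $4-\delta$, as your step (i) claims. A time-zero statement needs two further steps:
\begin{enumerate}[(a)]
\item Rescale by $\lambda$, so that $R_{\widehat g_\lambda}\le K$ near $\{f<C_0\lambda\}$ and ordinary parabolic blocks sit inside $P^*$-neighbourhoods. At unit $g$-scale far out one only knows $R\le f\sim d^2/4$, so your blocks $B(q,1)\times[-1,0]$ are not controlled.
\item Use that a bad point persists only for time $\sim\sigma^2$. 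This costs a factor $\sigma^2$, and the correct slice bound is $\Vol\bigl(B_{\widehat g_\lambda(0)}(x,L)\cap\{\widehat r_\lambda\le\sigma\}\bigr)\le C\sigma^{q-2}L^{n+2-q}$ (\Cref{lem:slice-tail}).
\end{enumerate}
Turning this into a small \emph{fraction} requires $\Vol B_{\widehat g_\lambda(0)}(x,L/2)\ge cL^n$ for $L\gg A$. That is exactly the uniform geometry that is not known. Li--Wang's no-local-collapsing (\Cref{prop:LW-volume-estimates}) gives it only at the scale $r_0\sim K^{-1/2}$ where $R\le r_0^{-2}$, and at that scale the ratio of bad to total volume is not small.

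Your ``low dimensions'' paragraph has the same defect: it sums absolute bounds over a cover whose cardinality is not controlled by volume. Since it never uses $n$, the ``uniformity'' you flag is not a technicality but the whole difficulty.

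Step (ii) does not fix this. Passing via self-similarity to time $1-\lambda^{-1}$ turns the problem into a single ball of $g$-radius $\sim\sqrt\lambda$. At best this reproduces an absolute bound like \eqref{eq:LW-weighted}, i.e.\ $o(\lambda^{(n-6+4\epsilon)/2})$ for the bad set. In general only $F(\lambda)\ge c\sqrt\lambda$ is known, so this closes only for $n\le6$; that is precisely the paper's low-dimensional proof.

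The missing idea for $n\ge7$ is a dichotomy by local scalar-curvature energy. Your step (iii) invokes the Li--Wang Sobolev inequality to count balls, but that inequality contains $\int Ru^2$, which cannot in general be absorbed at scale $L\gg K^{-1/2}$.

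The paper absorbs it only on balls with $\mathcal E_{\lambda,L}(x)=\int_{B_{\widehat g_\lambda(0)}(x,2L)}R_{\widehat g_\lambda(0)}\le\eta_0$. Together with $R\le K_1$, this makes $\|R\|_{L^{n/2}}$ small. The resulting Sobolev inequality gives $\Vol B_{\widehat g_\lambda(0)}(x,L/2)\ge c_0L^n$ (\Cref{lem:time-zero-noncollapse}). So the low-energy balls of a maximal $L$-separated cover contribute at most $CA^{q-2}L^{2-q}F(\lambda)$.

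The high-energy balls are few because of the Cao--Zhou bound $\int_{\{f<s\}}R\le\frac n2F(s)$. After rescaling by $\lambda$ this carries an extra factor $\lambda^{-1}$, so their union has $g$-volume $O(\lambda^{-1}F(\lambda))$. Overlap is controlled by \Cref{prop:LW-volume-estimates} at the fixed scale $r_0$, not by a ball-doubling property, which is not available.

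Letting $\lambda\to\infty$ and then $L\to\infty$ finishes the proof. This last step is where $q>2$ is used, i.e.\ where the $\sigma^2$ loss above turns out to be affordable.
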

	
	In \Cref{sec:analytic}, we prove  that
	\Cref{thm:estimate} implies \Cref{thm:main}; see
	\Cref{thm:averaged}.

	The scalar curvature appearing in
	$
	V_f=\frac14(f+R-n)
	$
	makes the phase integral in \eqref{eq:Weyl law with scalar curvature}
	less explicit.  To remove it from the leading term, define
	\[
	\widetilde V_f:=\frac14(f-n)
	\quad
	\text{and}\quad
	\Psi_f(\lambda)
	:=
	(2\pi)^{-n}\omega_n
	\int_M(\lambda-\widetilde V_f)_+^{n/2}\,\dvol_g.
	\]
	We prove in \Cref{prop:phase-comparison} that, for every complete noncompact
	Ricci shrinker,
	\be\label{eq:phase-comparison0}
	\Phi_f(\lambda)\sim\Psi_f(\lambda),
	\qquad \lambda\to+\infty.
	\ee
	Consequently:
	
	\begin{cor}
		\label{cor:main}
		Let $(M^n,g,f)$ be a complete Ricci shrinker.  Then
		\[
		N_f(\lambda)
		\sim
		(2\pi)^{-n}\omega_n
		\int_M
		\left(\lambda-\frac14(f-n)\right)_+^{n/2}\,\dvol_g,
		\qquad \lambda\to+\infty.
		\]
	\end{cor}
	
	Thus the leading spectral asymptotic depends only on the volume distribution
	of the shrinker potential.  In particular, when the sublevel volume has a
	precise power-law asymptotic, the phase integral can be evaluated explicitly.
	
	\begin{cor}
		\label{cor:constant}
		Suppose that there exist constants $C_f>0$ and $d>0$ such that
		\be\label{eq:regular-variation}
		F(\lambda)\sim C_f\lambda^{d/2},
		\qquad \lambda\to+\infty.
		\ee
		Then
		\[
		N_f(\lambda)
		\sim
		(2\pi)^{-n}\omega_n C_f4^{d/2}
		\frac{
			\Gamma(\frac d2+1)\Gamma(\frac n2+1)
		}{
			\Gamma(\frac{n+d}{2}+1)
		}
		\lambda^{(n+d)/2},
		\qquad \lambda\to+\infty.
		\]
	\end{cor}
	
	\begin{proof}
		Put $p=n/2$ and $\widetilde\lambda=\lambda+n/4$.  By Tonelli's theorem,
		\[
		\int_M(\widetilde\lambda-f/4)_+^p\,\dvol_g
		=
		p\int_0^{\widetilde\lambda}
		(\widetilde\lambda-u)^{p-1}F(4u)\,du
		=p\widetilde\lambda^pF(4\widetilde\lambda)
		\int_0^1
		(1-v)^{p-1}
		\frac{F(4\widetilde\lambda v)}
		{F(4\widetilde\lambda)}
		\,dv.
		\]
		By \eqref{eq:regular-variation},
		$
		\frac{F(4\widetilde\lambda v)}
		{F(4\widetilde\lambda)}
		\longrightarrow v^{d/2},\l\to\infty
		$
		for every $v\in(0,1]$.  Since the quotient is bounded by $1$, dominated
		convergence gives
		\[
		\int_0^1
		(1-v)^{p-1}
		\frac{F(4\widetilde\lambda v)}
		{F(4\widetilde\lambda)}  
		\,dv
		\longrightarrow
		\int_0^1(1-v)^{p-1}v^{d/2}\,dv
		=
		\frac{
			\Gamma(\frac d2+1)\Gamma(p+1)
		}{
			\Gamma(\frac d2+p+1)
		}.
		\]
		Finally,
		\[
		F(4\widetilde\lambda)
		\sim
		C_f4^{d/2}\widetilde\lambda^{d/2},
		\qquad
		\widetilde\lambda\sim\lambda,
		\]
		which proves the claim.
	\end{proof}
	\subsection{Organization of the paper}
	In \Cref{sec:mainprooflow}, we first prove \Cref{thm:estimate}
	in dimensions $n\leq 6$.  In this range, the curvature-radius integral
	estimate of Li--Wang, together with the lower volume growth of the
	$f$-sublevel sets, is already sufficient to obtain the required
	relative-volume estimate.
	
	In \Cref{sec: review}, we introduce the canonical Ricci flow, the
	spacetime potential, heat-kernel neighborhoods, and the curvature radius,
	and collect all Li--Wang estimates used later.
	
	In \Cref{sec: bad set}, we prove \Cref{thm:estimate} in higher
	dimensions.  The main ingredients are a  curvature-radius
	estimate, a volume-noncollapsing argument from small
	scalar-curvature energy, and a covering argument.
	
	Finally, in \Cref{sec:analytic}, we prove \Cref{thm:averaged}.
	After establishing the required sublevel-set and heat-kernel estimates, we
	prove a uniform local Weyl asymptotic on the geometrically regular region
	and show that the complementary region makes a negligible contribution.
	This  together with
	\Cref{thm:estimate} yields \Cref{thm:main}.
	
	\subsection*{Acknowledgment}
	The author would like to thank Fei He for helpful discussions and for
	raising the question of whether the classical Weyl law holds for the
	weighted Laplacian on every complete Ricci shrinker, which motivated
	the present work.

		\subsection*{AI use disclosure}
	Generative AI tools were used to assist with literature searches, understanding some cited references, and searching proofs and filling in some technical details based on ideas and proof strategies provided by the author, as well as with language editing. The mathematical arguments and proofs were developed and finalized by the author, who take full responsibility for the content of the manuscript.

	\section{Proof of \Cref{thm:estimate}  for $n\leq6$}
	\label{sec:mainprooflow}
	
	The main input is the following inequality, which follows from
	\cite[Theorem 1.7]{LWheat} and \Cref{prop:radius-comparison} in \Cref{sec: review}. Let $p$ be a fixed point. Then, for every $\epsilon>0$,
	\be\label{eq:LW-weighted}
	\int_{\{d(p,x)\geq1\}}
	\frac{\rho(x)^{-4+2\epsilon}}
	{d(p,x)^{n-2+2\epsilon}}\dd\mathrm{vol}_g(x)<\infty.
	\ee

	\begin{lem}\label{lem:bad-set}
		Fix $\epsilon\in(0,1)$ and $A,C>0$.  Then
		\[
		\Vol_g\{f<C\lambda,\ \sqrt\lambda\,\rho\leq A\}
		=o\left(\lambda^{(n-6+4\epsilon)/2}\right).
		\]
	\end{lem}
	
	\begin{proof}
		Put
		\[
		q=4-2\epsilon,\qquad
		I(R)=\int_{B_g(p,R)}\rho^{-q}\dd\mathrm{vol}_g.
		\]
		The quadratic lower growth of $f$ (c.f. \cite[Theorem 1.1]{CZ}) gives
		$
		\{f<C\lambda\}\subset B_g(p,C'\sqrt\lambda)
		$
		for all large $\lambda$.  
		Chebyshev's inequality gives
		\begin{align}\label{eq: Che}
			\Vol_g\{f<C\lambda,\sqrt\lambda\,\rho\leq A\}
			&\leq (C''A\lambda^{-1/2})^q I(C'\sqrt\lambda)
		\end{align}
		We next prove
		\be\label{eq:I-little-o}
		I(\l)=o(\l^{n-2+2\epsilon}).
		\ee
		The integral over $B_g(p,1)$ is finite.  On its complement,
		\[
		\frac1{\l^{n-2+2\epsilon}}
		\int_{\{1<d(p,x)<\l\}}\rho^{-q}\dd\mathrm{vol}_g
		=
		\int_{\{1<d(p,x)<\l\}}
		\left(\frac{d(p,x)}\l\right)^{n-2+2\epsilon}
		\frac{\rho^{-q}}{d(p,x)^{n-2+2\epsilon}}\dd\mathrm{vol}_g.
		\]
		For each fixed $x$, the integrand tends to zero as $\l\to\infty$, and it is
		dominated by the integrable function in \eqref{eq:LW-weighted}.
		Dominated convergence proves \eqref{eq:I-little-o}.

		The Lemma then follows from \eqref{eq: Che} and \eqref{eq:I-little-o}.
		
	\end{proof}

	\begin{proof}[Proof of Theorem~\ref{thm:estimate} for $n\leq6$]
		We may assume that $M$ is noncompact. It follows from \cite[Theorem 1.6]{MunteanuWang2012} and \cite[Theorem 1.1]{CZ} that, for some $C_0>0$ and all sufficiently large $\l$,
		\be\label{eq:lower-growth}
		F(\l)\geq C_0\sqrt{\l}.
		\ee
		Fix $\epsilon\in (\max\{0,1-n/2\},\min\{1,7/4-n/4\})$. For fixed $A,C>0$, Lemma~\ref{lem:bad-set} and \eqref{eq:lower-growth} give
		\begin{align*}
			\frac{\Vol_g\{f<C\lambda,\sqrt\lambda\,\rho\leq A\}}
			{F(\lambda)}
			&=
			o\left(\lambda^{(n-6+4\epsilon-1)/2}\right)
			\longrightarrow0.
		\end{align*}
		Thus, \Cref{thm:estimate} follows for $n\leq 6$. 
	\end{proof}
	
	In higher dimensions, the analysis is more complicated and requires Li--Wang's  analysis for the Ricci flows induced by Ricci shrinkers.

	\def\badsetone{1}
	\section{Ricci-flow preliminaries and local estimates}\label{sec: review}
	
	We collect here the Ricci-flow notation and local estimates used in
	\Cref{sec: bad set}. The principal inputs are the heat-kernel,
	curvature-radius, and Sobolev estimates of Li--Wang
	\cite{LW20,LWheat}, together with some notions considered in
	\cite{BamlerCompactness,BamlerStructure}.
	
	\subsection{The canonical flow and the spacetime potential}
	
	Let $(M^n,g,f)$ be a normalized gradient Ricci shrinker and fix
	$\Lambda<\infty$ such that
	\[
	\boldsymbol\mu(g):=
	\log\left((4\pi)^{-n/2}\int_M e^{-f}\,\dvol_g\right)\ge -\Lambda.
	\]
	The diffeomorphisms
	\[
	\frac{d}{dt}\psi^t=\frac{\nabla f\circ\psi^t}{1-t},
	\qquad
	\psi^0=\mathrm{id},
	\]
	define the associated Ricci flow
	\[
	g(t)=(1-t)(\psi^t)^*g,\qquad -\infty<t<1.
	\]
	It satisfies
	\[
	\partial_tg(t)=-2\Ric_{g(t)},\qquad g(0)=g.
	\]
	A time translation and parabolic rescaling of this flow will be called
	a Ricci flow \emph{induced} by the shrinker.
	
	Put
	\[
	\bar\tau:=1-t,\qquad
	f_t:=(\psi^t)^*f,\qquad
	\mathcal F(\cdot,t):=\bar\tau f_t.
	\]
	With our convention that $\Delta_{g(t)}$ is nonnegative, the shrinker
	identities give
	\begin{align}
		\partial_t\mathcal F&=-\bar\tau R_{g(t)}, \label{eq:F-time-derivative}\\
		\bar\tau R_{g(t)}-\Delta_{g(t)}\mathcal F&=\frac n2, \label{eq:F-trace}\\
		\bar\tau^2R_{g(t)}+|\nabla\mathcal F|_{g(t)}^2&=\mathcal F, \label{eq:F-gradient}\\
		(\partial_t+\Delta_{g(t)})\mathcal F&=-\frac n2. \label{eq:F-heat}
	\end{align}
	In particular $R\ge0$ and \eqref{eq:F-gradient} imply
	\begin{equation}\label{eq:F-scalar-bound}
		0\le \bar\tau^2R_{g(t)}\le \mathcal F.
	\end{equation}
	
	We will repeatedly use the following coarse comparison; see
	\cite[Lemma~2.3 and Lemma~4.8]{LWheat}.
	
	\begin{prop}
		\label{prop:spacetime-potential-distance-control}
		Let $p$ be a minimum point of $f$, and let
		$J\Subset(-\infty,1)$ be compact.  There is
		$C_J=C_J(n,J)>1$ such that, for all $x\in M$ and $s,t\in J$,
		\begin{align}
			C_J^{-1}\bigl(d_{g(t)}(p,x)-C_J\bigr)_+^2
			&\le \mathcal F(x,t)
			\le C_J\bigl(d_{g(t)}(p,x)+1\bigr)^2,
			\label{eq:23-F-quadratic}\\
			C_J^{-1}\bigl(d_{g(s)}(p,x)+1\bigr)
			&\le d_{g(t)}(p,x)+1
			\le C_J\bigl(d_{g(s)}(p,x)+1\bigr).
			\label{eq:23-time-distance-comparison}
		\end{align}
		The corresponding estimates for an induced flow follow by time
		translation and parabolic rescaling.
	\end{prop}

	\subsection{Heat kernels and parabolic neighborhoods}
	
	Let $H(x,t;y,s)$, $s<t$, denote the heat kernel of the Ricci flow.
	Thus
	\[
	\bigl(\partial_t+\Delta_{g(t)}\bigr)u=0,
	\qquad
	u(x,t)=\int_M H(x,t;y,s)u(y,s)\,\dvol_{g(s)}(y).
	\]
	For a fixed spacetime point $(x,t)$,
	\[
	d\nu_{x,t;s}(y):=
	H(x,t;y,s)\,\dvol_{g(s)}(y)
	\]
	is the conjugate heat-kernel probability measure on the time-$s$
	slice, and $\nu_{x,t;t}=\delta_x$.  Existence, positivity, and
	stochastic completeness for shrinker flows are proved in
	\cite[Theorem~7]{LW20}.
	
	\begin{defn}[Coupling of measures]
		Let $X_1$ and $X_2$ be measurable spaces, and let
		$\mu_1\in\mathcal P(X_1)$ and $\mu_2\in\mathcal P(X_2)$ be
		probability measures.  A \emph{coupling} of $\mu_1$ and $\mu_2$
		is a probability measure
		\[
		q\in\mathcal P(X_1\times X_2)
		\]
		whose first and second marginals are $\mu_1$ and $\mu_2$,
		respectively.  Equivalently,
		\[
		(\pi_1)_*q=\mu_1,
		\qquad
		(\pi_2)_*q=\mu_2,
		\]
		where
		$ \pi_i:X_1\times X_2\longrightarrow X_i
		$
		are the coordinate projections.  The set of all couplings of
		$\mu_1$ and $\mu_2$ is denoted by
		\[
		\operatorname{Cpl}(\mu_1,\mu_2).
		\]
		
	\end{defn}
	
	For probability measures $\mu,\nu$ on a metric space $(X,d)$, write
	\[
	W_1^d(\mu,\nu):=
	\inf_{q\in\operatorname{Cpl}(\mu,\nu)}
	\int_{X\times X}d(x,y)\,dq(x,y).
	\]
	For a Riemannian metric $g$ we abbreviate $W_1^g:=W_1^{d_g}$.
	
	For $S,T^-,T^+\ge0$, define
	\[
	P(x_0,t_0;S,-T^-,T^+)
	:=
	B_{g(t_0)}(x_0,S)
	\times([t_0-T^-,t_0+T^+]\cap I).
	\]
	Set
	\[
	P(x_0,t_0;r):=P(x_0,t_0;r,-r^2,r^2).
	\]
	The heat-kernel neighborhood is
	\begin{align}
		&P^*(x_0,t_0;S,-T^-,T^+) :=
		\left\{
		(x,t):
		\begin{array}{l}
			t\in[t_0-T^-,t_0+T^+]\cap I,\\[1mm]
			W_1^{g(t_0-T^-)}
			\bigl(
			\nu_{x_0,t_0;t_0-T^-},
			\nu_{x,t;t_0-T^-}
			\bigr)<S
		\end{array}
		\right\}.
		\label{eq:23-Pstar-definition}
	\end{align}
	We abbreviate
	\[
	P^*(x_0,t_0;r):=P^*(x_0,t_0;r,-r^2,r^2).
	\]

	\subsection{Curvature radius and Li--Wang estimates}
	
	For a Ricci flow $(M,g(t))_{t\in I}$ define
	\begin{equation}\label{eq:23-curvature-radius}
		r_{\Rm,g}(x,t)
		:=
		\sup\bigl\{
		r>0:
		|\Rm_{g(s)}|(y)\le r^{-2}
		\text{ for all }(y,s)\in P(x,t;r)
		\bigr\}.
	\end{equation}
	For the associated flow at time zero, write
	\begin{equation}\label{eq:23-time-zero-curvature-radius}
		r_{\Rm}(x):=r_{\Rm,g}(x,0).
	\end{equation}

	The following three estimates are the only quantitative results from
	Li--Wang that enter the high-dimensional argument.
	
	\begin{prop}[Theorem~1.2 in \cite{LWheat}]
		\label{prop:LW-volume-estimates}
		Whenever $[t-r^2,t]\subset I$,
		\[
		\Vol_{g(t)}B_{g(t)}(z,r)\leq C(n)r^n.
		\]
		If also $R_{g(t)}\leq r^{-2}$ on $B_{g(t)}(z,r)$, then
		\[
		\Vol_{g(t)}B_{g(t)}(z,r)\geq c(n)e^{-\Lambda}r^n.
		\]
	\end{prop}
	\begin{proof}
		For the associated Ricci flow, these are the non-expanding and
		no-local-collapsing estimates in
		\cite[Theorem~1.2]{LWheat}.  The statement is invariant under time
		translation and parabolic rescaling.
	\end{proof}

	\begin{prop}[Theorem~6.23 in \cite{LWheat}]
		\label{prop:LW-codimension-four}
		For every $q\in(0,4)$, there is $C=C(n,\Lambda,q)<\infty$ such that,
		whenever $P^*(x_0,t_0;r)$ is defined,
		\[
		\int_{[t_0-r^2,t_0+r^2]\cap I}
		\int_{P^*(x_0,t_0;r)_t}r_{\Rm,g}(y,t)^{-q}\,\dvol_{g(t)}(y)\,dt
		\leq Cr^{n+2-q}.
		\]
		Here $P^*(x_0,t_0;r)_t=\{y:(y,t)\in P^*(x_0,t_0;r)\}$.
	\end{prop}
	\begin{proof}
		For the associated flow, take $q=4-2\epsilon$ in
		\cite[Theorem~6.23]{LWheat}.  The general induced-flow statement follows
		by time translation and parabolic rescaling.
	\end{proof}
	
	\begin{prop}[ Proposition~5.4 in \cite{LWheat}]
		\label{prop:LW-center-estimate}
		There is $C_{\mathrm{cen}}=C_{\mathrm{cen}}(n,\Lambda)$ such that the following holds.
		Let
		$
		s\leq t_1<t_2,
		[s,t_2]\subset I,
		$
		and suppose that
		\[
		R_{g(t)}(x_0)\leq (t_2-t_1)^{-1}
		\qquad
		\text{for every }t\in[t_1,t_2].
		\]
		Then
		\[
		W_1^{g(s)}
		\bigl(
		\nu_{x_0,t_2;s},
		\nu_{x_0,t_1;s}
		\bigr)
		\leq
		C_{\mathrm{cen}}\sqrt{t_2-t_1}.
		\]
		In particular,
		\[
		W_1^{g(t_1)}(\nu_{x_0,t_2;t_1},\delta_{x_0})
		\leq C_{\mathrm{cen}}\sqrt{t_2-t_1}.
		\]

	\end{prop}
	\begin{proof}
		For the associated Ricci flow, part~(1) is
		\cite[Proposition~5.4 and Proposition 3.7]{LWheat}.  The assertion is invariant under time
		translation and parabolic rescaling.

	\end{proof}

	\begin{prop}[Comparison of regularity radii]
		\label{prop:radius-comparison}
		For the radius $\rho$ defined in \eqref{eq:defn rho}, there is
		$c=c(n,\Lambda)>0$ such that
		\[
		c\,r_{\Rm}(x)\le \rho(x)\le c^{-1}r_{\Rm}(x),
		\qquad x\in M.
		\]
	\end{prop}
	
	\begin{proof}
		Set $r=r_{\Rm}(x)$.  Shi's estimates \cite{Shi} give
		\[
		r^{2+k}|\nabla^k\Rm|\le C_k,\qquad 0\le k\le2,
		\]
		on a fixed fraction of $B_g(x,r)$.  By
		\Cref{prop:LW-volume-estimates}, for every $s<r/2$,
		\[
		\Vol_g B_g(y,s)\ge c(n)e^{-\Lambda}s^n,
		\qquad y\in B_g(x,r/2).
		\]
		The Cheeger--Gromov--Taylor injectivity estimate \cite{CGT} then gives
		$\inj_g\ge c_0r$ on $B_g(x,r/4)$, and hence
		$\rho(x)\ge c\,r_{\Rm}(x)$.  The reverse inequality follows from
		\cite[Corollary~6.22]{LWheat}.
	\end{proof}
	
	We also use the elementary continuity of the curvature radius.
	
	\begin{lem}\label{lem:rRm-time-continuity}
		Let $(M^n,g(t))_{t\in I}$ be a smooth complete Ricci flow. Then
		\begin{enumerate}[(1)]
			\item for every $t\in I$ and $x,y\in M$,
			\[
			\bigl|r_{\Rm,g}(x,t)-r_{\Rm,g}(y,t)\bigr|
			\le d_{g(t)}(x,y);
			\]
			\item there is $C_n<\infty$ such that, for every fixed $x$,
			\[
			\bigl|
			r_{\Rm,g}(x,t_2)^2-r_{\Rm,g}(x,t_1)^2
			\bigr|
			\le C_n|t_2-t_1|
			\]
			whenever $t_1,t_2$ lie in a compact subinterval of $I$.
		\end{enumerate}
	\end{lem}
	
	\begin{proof}
		Both assertions follow directly from the definition
		\eqref{eq:23-curvature-radius} and the Ricci-flow equation.
	\end{proof}

	\subsection{Fixed rescaled blocks}
	
	For $\lambda\ge1$ set
	\[
	\widehat g_\lambda(t):=\lambda g(\lambda^{-1}t),
	\qquad
	\widehat r_\lambda(x,t):=
	r_{\Rm,\widehat g_\lambda}(x,t).
	\]
	
	\begin{lem}
		\label{lem:rescaled-scalar-bound}
		Fix $C_0,L,T>0$.  There are
		\[
		C_1=C_1(C_0)>C_0,\qquad K=K(C_0)<\infty,
		\]
		and $\lambda_0=\lambda_0(C_0,L,T)<\infty$ such that, whenever
		$\lambda\ge\lambda_0$ and $x\in\{f<C_0\lambda\}$,
		\begin{equation}\label{eq:rescaled-block-bound}
			y\in B_{\widehat g_\lambda(0)}(x,L),\quad |t|\le T
			\quad\Longrightarrow\quad
			\mathcal F(y,\lambda^{-1}t)<C_1\lambda,\qquad
			0\le R_{\widehat g_\lambda(t)}(y)\le K.
		\end{equation}
		Moreover, if $0<T_-\le K^{-1}$, then
		\begin{equation}\label{eq:ordinary-to-Pstar}
			P(x,0;L,-T_-,0)
			\subset
			P^*\bigl(
			x,0;L+C_{\mathrm{cen}}\sqrt{T_-},-T_-,0
			\bigr).
		\end{equation}
	\end{lem}
	
	\begin{proof}
		For $|t|\le T$, the shrinker identities and
		\eqref{eq:F-time-derivative}--\eqref{eq:F-scalar-bound} give, uniformly
		for large $\lambda$,
		\begin{equation}\label{eq:F-time-comparison}
			e^{-C_T/\lambda}f(z)
			\le \mathcal F(z,\lambda^{-1}t)
			\le e^{C_T/\lambda}f(z).
		\end{equation}
		If $y\in B_{\widehat g_\lambda(0)}(x,L)$, then
		$|\nabla\sqrt f|_g\le1/2$ gives
		\[
		\sqrt{f(y)}
		\le\sqrt{f(x)}+\frac{L}{2\sqrt\lambda}
		\le\sqrt{C_0\lambda}+\frac{L}{2\sqrt\lambda}.
		\]
		Thus $f(y)\le C_1'\lambda$ for all sufficiently large $\lambda$, and
		\eqref{eq:F-time-comparison} gives the first assertion in
		\eqref{eq:rescaled-block-bound}.  The scalar bound follows from
		\eqref{eq:F-scalar-bound} after parabolic rescaling.
		
		For \eqref{eq:ordinary-to-Pstar}, let
		$(y,t)\in P(x,0;L,-T_-,0)$.  Since $T_-\le K^{-1}$,
		\Cref{prop:LW-center-estimate} and
		\eqref{eq:rescaled-block-bound} give
		\[
		W_1^{\widehat g_\lambda(-T_-)}
		\bigl(
		\nu_{y,t;-T_-},\nu_{y,0;-T_-}
		\bigr)
		\le C_{\mathrm{cen}}\sqrt{T_-}.
		\]
		The $W_1$-contraction of the heat flow gives
		\[
		W_1^{\widehat g_\lambda(-T_-)}
		\bigl(
		\nu_{x,0;-T_-},\nu_{y,0;-T_-}
		\bigr)
		\le d_{\widehat g_\lambda(0)}(x,y)<L.
		\]
		The triangle inequality proves \eqref{eq:ordinary-to-Pstar}.
	\end{proof}

	\section{Proof of \Cref{thm:estimate} in higher dimensions}
	\label{sec: bad set}
	
	Since \Cref{thm:estimate} was proved in \Cref{sec:mainprooflow} for
	$n\le6$, it remains to treat $n\ge7$.  By
	\Cref{prop:radius-comparison}, it is enough to prove the following
	statement.
	
	\begin{thm}\label{thm: bad set}
		Let $(M^n,g,f)$ be a complete normalized gradient Ricci shrinker with
		$n\ge7$.  Then, for every fixed $A,C_0>0$,
		\begin{equation}\label{eq:relative-rRm}
			\Vol_g
			\bigl\{
			f<C_0\lambda,\ \sqrt{\lambda}\,r_{\Rm}\le A
			\bigr\}
			=o\bigl(F(\lambda)\bigr),
			\qquad \lambda\to+\infty.
		\end{equation}
	\end{thm}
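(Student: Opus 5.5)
The plan is to work in the parabolically rescaled flows $\widehat g_\lambda(t)=\lambda g(\lambda^{-1}t)$ of \Cref{lem:rescaled-scalar-bound}. In $\widehat g_\lambda$ the bad set $\{f<C_0\lambda,\ \sqrt\lambda\, r_{\Rm}\le A\}$ becomes the set of points $x\in\{f<C_0\lambda\}$ with $\widehat r_\lambda(x,0)\le A$. Volumes are multiplied by $\lambda^{n/2}$, so \eqref{eq:relative-rRm} is equivalent to
\[
\Vol_{\widehat g_\lambda(0)}\{f<C_0\lambda,\ \widehat r_\lambda(\cdot,0)\le A\}
=o\bigl(\Vol_{\widehat g_\lambda(0)}\{f<\lambda\}\bigr).
\]
By \Cref{lem:rescaled-scalar-bound}, on $\widehat g_\lambda$-balls of any fixed radius $L$ around such points and for $|t|\le T$ we have $0\le R\le K$. \Cref{prop:LW-volume-estimates} then gives two-sided unit-scale volume bounds $c\le \Vol B_{\widehat g_\lambda(0)}(z,1)\le C$.

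\emph{Step 1: passing from time zero to spacetime.} By \Cref{lem:rRm-time-continuity}(2), $\widehat r_\lambda(x,0)\le A$ implies $\widehat r_\lambda(x,t)\le 2A$ for $t\in[-T_-,0]$ with $T_-=\min\{cA^2,K^{-1}\}$. Since $R\le K$, the volume forms $\dvol_{\widehat g_\lambda(t)}$ are uniformly comparable for $|t|\le T_-$. Hence the time-zero bad volume is bounded by $C T_-^{-1}$ times the spacetime volume of $\{\widehat r_\lambda\le 2A\}$ over $[-T_-,0]$, localized to the region where $\mathcal F<C_1\lambda$.

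\emph{Step 2: covering and the codimension-four estimate at a large scale $r$.} Fix a large $r$ (to be sent to infinity after $\lambda$). Take a maximal $r$-separated set $\{x_i\}$ in $\{f<C_0\lambda\}$ for $\widehat g_\lambda(0)$. By \eqref{eq:ordinary-to-Pstar} (adapted with $r^2$ in place of $T_-$ where $R\le r^{-2}$ holds, or else using the unit-time version), each $P(x_i,0;r,-T_-,0)$ sits in a heat-kernel neighborhood $P^*(x_i,0;r+C)$. On the latter, \Cref{prop:LW-codimension-four} with $q\in(0,4)$ and Chebyshev bound the spacetime bad volume by $C(2A)^q r^{n+2-q}$. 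The balls $B(x_i,r/2)$ are disjoint and the unit-scale bounds control overlaps, so the total bad volume is at most $C A^q r^{n+2-q}N_r$, where $N_r$ is the number of centers.

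\emph{Step 3: the main obstacle, noncollapsing at scale $r$.} I must show $N_r\, r^{n}\lesssim \Vol_{\widehat g_\lambda(0)}\{f<C'\lambda\}$, which is comparable to $\Vol\{f<\lambda\}$ by the doubling \eqref{eq:doubling}. This would give a ratio $O(A^q r^{-q})\to0$ as $r\to\infty$. The lower bound in \Cref{prop:LW-volume-estimates} requires $R\le r^{-2}$ on $B(x_i,r)$, and we only have $R\le K$. What I would try first is a small scalar-curvature-energy argument. Tracing the shrinker equation gives $\int_{\{f<\lambda\}}R\,\dvol_g\le \tfrac n2F(\lambda)$, so in $\widehat g_\lambda$ the average of $R$ over the region is $O(\lambda^{-1})$. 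Consequently all but a negligible fraction of the balls $B(x_i,r)$, counted with volume, have mean scalar curvature far below $r^{-2}$. On those balls I would replace the pointwise hypothesis by an integral one. Concretely, I would use Li--Wang's Sobolev inequality together with the heat-kernel/$\nu$-entropy bound $\boldsymbol\mu\ge-\Lambda$ in the standard Perelman-type no-local-collapsing argument; there $R$ enters only through $\int R u^2$, so small $L^1$ energy suffices. This yields $\Vol B(x_i,r)\ge c r^n$. The remaining balls, those with large curvature energy, are handled by Chebyshev: their number times $r^n$ is $o(\Vol\{f<\lambda\})$ as $\lambda\to\infty$ for fixed $r$, using the upper bound $\Vol B\le Cr^n$.

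Combining the three steps, I would take $\limsup_{\lambda\to\infty}$ of the ratio, which is at most $C(A,n,\Lambda)r^{-q}$. Letting $r\to\infty$ proves \eqref{eq:relative-rRm}. Step 3 is the delicate point: it is where the hypothesis $n\ge7$ forces the spacetime and Sobolev machinery in place of the direct estimate \eqref{eq:LW-weighted}.
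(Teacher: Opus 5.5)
Your outline has the same architecture as the paper's proof: persistence of small $\widehat r_\lambda$ on a time window of length $T_-\sim\min\{A^2,K^{-1}\}$, containment in a $P^*$-neighborhood, the Li--Wang codimension-four bound per ball, a maximal separated net, a scalar-energy dichotomy via Chebyshev and $\int_{\{f<s\}}R\le\frac n2F(s)$, noncollapsing for low-energy balls, and then $r\to\infty$ after $\lambda\to\infty$. Two steps fail as written, though.

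The first is the bookkeeping. Per ball you get a time-zero bad volume $\lesssim T_-^{-1}A^qr^{n+2-q}$. Dividing by the $r^n$ from noncollapsing leaves $T_-^{-1}A^qr^{2-q}$, not $A^qr^{-q}$. The extra $r^2$ appears because Li--Wang's bound integrates over a time interval of length $\sim r^2$, while you only use a window of fixed length $T_-$. For $q\in(0,2]$ this does not tend to zero, so you must fix $q\in(2,4)$, as the paper does; its bound is $A^{q-2}L^{2-q}$.

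The second issue is more substantive. In the noncollapsing step, the claim ``small $L^1$ energy suffices since $R$ enters only through $\int Ru^2$'' is false on its own. In the Sobolev argument, $\int Ru^2$ is controlled by $\|R\|_{L^{n/2}}\|u\|_{2n/(n-2)}^2$. In the $\mathcal W$-entropy argument it is controlled by roughly $\tau\int_BR/\Vol(B)$, which involves the very volume you are trying to bound from below. For example, a thin neck $S^{n-1}(\epsilon)\times[0,1]$ satisfies a Sobolev inequality of this form (all terms are scale-invariant) and has $\int R\sim\epsilon^{n-3}\to0$, yet it is collapsed.

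The missing ingredient is the pointwise bound $0\le R_{\widehat g_\lambda(0)}\le K$ on the block from Lemma~\ref{lem:rescaled-scalar-bound}. You state it but never use it in Step 3. It gives $\int_{B(x,2r)}R^{n/2}\le K^{n/2-1}\int_{B(x,2r)}R$. Hence an absolute threshold $\int_{B(x,2r)}R\le\eta_0(n,\Lambda,C_0)$ makes $\|R\|_{L^{n/2}(B(x,2r))}$ small enough to absorb $\int Ru^2$ into Li--Wang's Sobolev inequality. This yields a potential-free Sobolev inequality on $B(x,2r)$, and therefore $\Vol B(x,s)\ge cs^n$ for $s\le r/2$; this is Lemma~\ref{lem:time-zero-noncollapse}.

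Correspondingly, the dichotomy must be on total energy rather than on ``mean far below $r^{-2}$''. A mean bound $\le\eta r^{-2}$ only gives $\int R^{n/2}\lesssim K^{n/2-1}\eta r^{n-2}$, which is not small for large $r$. The change costs nothing. With a fixed threshold $\eta_0$, Chebyshev and bounded overlap give at most $O_r(\lambda^{n/2-1}F(\lambda))$ high-energy balls. Their total $\widehat g_\lambda(0)$-volume is $O_r(\lambda^{n/2-1}F(\lambda))$, i.e.\ $O_r(\lambda^{-1}F(\lambda))=o(F(\lambda))$ in $g$.
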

	
	For $\lambda\ge1$ we continue to write
	\[
	\widehat g_\lambda(t):=\lambda g(\lambda^{-1}t),
	\qquad
	\widehat r_\lambda(x,t):=
	r_{\Rm,\widehat g_\lambda}(x,t).
	\]

	Let $(g(t))_{t<1}$ be the canonical Ricci flow associated with the
	shrinker, and set
	\[
	\widehat g_\lambda(t):=\lambda g(\lambda^{-1}t),
	\qquad
	\widehat r_\lambda(x,t)
	:=
	r_{\Rm,\widehat g_\lambda}(x,t).
	\]
	The proof is based on the scalar-curvature energy
	\[
	\mathcal E_{\lambda,L}(x)
	:=
	\int_{B_{\widehat g_\lambda(0)}(x,2L)}
	R_{\widehat g_\lambda(0)}
	\,\dvol_{\widehat g_\lambda(0)},
	\]
	where $L$ is fixed.
	
	We establish two estimates. First, the  curvature-radius
	integral estimate
	of Li--Wang (see Theorem 6.23 in Li--Wang \cite{LWheat} or
	\Cref{prop:LW-codimension-four}) implies,
	that
	\[
	\Vol_{\widehat g_\lambda(0)}
	\left(
	B_{\widehat g_\lambda(0)}(x,L)
	\cap
	\{\widehat r_\lambda(\cdot,0)\leq A\}
	\right)
	\leq
	C A^{q-2}L^{n+2-q},
	\qquad
	2<q<4.
	\]
	See \Cref{lem:slice-tail}.
	
	Second, if $\mathcal E_{\lambda,L}(x)$ is sufficiently small, then
	\Cref{lem:time-zero-noncollapse} gives
	\[
	\Vol_{\widehat g_\lambda(0)}
	B_{\widehat g_\lambda(0)}(x,L/2)
	\geq
	cL^n.
	\]

	To conclude, we cover $\{f<C_0\lambda\}$ by balls centered at a
	maximal $L$-separated set and classify these balls according to
	whether $\mathcal E_{\lambda,L}$ lies above or below a fixed
	threshold, referring to them as high-energy and low-energy balls,
	respectively. The total scalar-curvature energy satisfies
	\[
	\int_{\{ f<C_1\lambda\}}
	R_{\widehat g_\lambda(0)}
	\,\dvol_{\widehat g_\lambda(0)}\,dt
	=
	O\bigl(\lambda^{n/2-1}F(\lambda)\bigr).
	\]
	Consequently, Chebyshev's inequality, together with the bounded
	overlap of the covering, shows that the union of the high-energy
	balls has volume
	\[
	o\bigl(\lambda^{n/2}F(\lambda)\bigr).
	\]
	
	For the low-energy balls, the lower volume estimate controls their
	number, while the upper estimate for the small-curvature-radius set
	gives a relative bound of the form
	$
	A^{q-2}L^{2-q}.
	$
	Since $q>2$, this quantity can be made arbitrarily small by choosing
	$L$ sufficiently large. This proves \eqref{eq:relative-rRm}.

	\subsection{Two volume estimates}
	
	Let $(M^n,g,f)$ be a normalized Ricci shrinker and let
	$(g(t))_{t<1}$ be its canonical Ricci flow.    For $\lambda\geq 1$, set
	\[
	\widehat g_\lambda(t):=\lambda g(\lambda^{-1}t),
	\qquad
	\widehat r_\lambda(x,t):=r_{\mathrm{Rm},\widehat g_\lambda}(x,t)
	=\sqrt\lambda\,
	r_{\mathrm{Rm},g}(x,\lambda^{-1}t).
	\]
	
	Choose once and for all
	\[
	2<q<4.
	\]
	
	\begin{lem}
		\label{lem:slice-tail}
		Fix $A,C_0>0$.  There exists
		\[
		C_{\mathrm{sm}}
		=
		C_{\mathrm{sm}}(n,\Lambda,p,A,C_0)<\infty
		\]
		such that, for every $L\ge\max\{A,1\}$ and all sufficiently large
		$\lambda$, if $x\in\{f<C_0\lambda\}$ and $0<\sigma\le A$, then
		\begin{equation}\label{eq:fixed-time-tail}
			\Vol_{\widehat g_\lambda(0)}
			\left(
			B_{\widehat g_\lambda(0)}(x,L)
			\cap
			\{\widehat r_\lambda(\cdot,0)\le\sigma\}
			\right)
			\le
			C_{\mathrm{sm}}\sigma^{q-2}L^{n+2-q}.
		\end{equation}
	\end{lem}
	
	\begin{proof}

		Let  $L\geq\max\{A,1\}$ and $\sigma\leq A$. Set
		\[
		E
		:=
		B_{\widehat g_\lambda(0)}(x,L)
		\cap
		\left\{
		\widehat r_\lambda(\cdot,0)
		\leq \sigma
		\right\}.
		\]

		Let $K=K(C_0)$ be the constant in
		\Cref{lem:rescaled-scalar-bound}, and set
		\[
		c_0
		:=
		\min\left\{
		\frac14,\frac{1}{KA^2}
		\right\}.
		\]
		Then on the one hand, for any $(y,t)\in E\times[-c_0\sigma^2,0]$, by \Cref{lem:rRm-time-continuity},
		\begin{align}
			\widehat r_\lambda(y,t)
			&\leq
			C_{\mathrm{tc}}
			\left(
			\widehat r_\lambda(y,0)+\sqrt{-t}
			\right)\leq
			C_{\mathrm{tc}}(1+\sqrt{c_0})\sigma
			\leq
			2C_{\mathrm{tc}} \sigma.
			\label{eq:dyadic-time-persistence}
		\end{align}
		On the other hand, since
		$
		c_0\sigma^2\leq K^{-1},
		$ by \Cref{lem:rescaled-scalar-bound}(1),
		\be\label{eq:slice-tail-block-containment}
		P(x,0;L,-c_0\sigma^2,0)
		\subset
		P^*(x,0;r_*),\quad \text{where }r_*=2+L+(1+C_{\mathrm{cen}})\sqrt{c_0}\sigma.
		\ee
		(We may increase $\lambda$ so that
		$
		[-r_*^2,r_*^2]
		$
		is contained in the time interval of the rescaled Ricci flow.)

		It follows from
		\eqref{eq:slice-tail-block-containment} and
		the simple fact $ E\times[-c_0\sigma^2,0]\subset P(x,0;L,-c_0\sigma^2,0)$ that
		\be\label{eq:dyadic-tube-Pstar}
		E\times[-c_0\sigma^2,0]
		\subset P^*(x,0;r_*).
		\ee
		Applying \Cref{prop:LW-codimension-four} 
		gives
		\be\label{eq:LW-spacetime-used}
		\int_{-r_*^2}^{r_*^2}
		\int_{P^*(x,0;r_*)_t}
		\widehat r_\lambda(y,t)^{-q}
		\,\dvol_{\widehat g_\lambda(t)}(y)\,dt
		\leq
		C_qr_*^{n+2-q}.
		\ee
		Since $R_{\widehat g_\lambda(t)}\geq0$, the Ricci-flow volume
		equation gives
		\be\label{eq:backward-volume-monotonicity}
		\dvol_{\widehat g_\lambda(t)}
		\geq
		\dvol_{\widehat g_\lambda(0)},
		\qquad t\leq0.
		\ee
		Using \eqref{eq:dyadic-time-persistence},
		\eqref{eq:dyadic-tube-Pstar}, and
		\eqref{eq:backward-volume-monotonicity}, we obtain
		\begin{align*}
			C_qr_*^{n+2-q}
			&\geq
			\int_{-c_0\sigma^2}^{0}
			\int_{E}
			\widehat r_\lambda(y,t)^{-q}
			\,\dvol_{\widehat g_\lambda(t)}(y)\,dt
			\\&\geq
			(4C_{\mathrm{tc}})^{-q}
			\sigma^{-q}
			\int_{-c_0\sigma^2}^{0}
			\Vol_{\widehat g_\lambda(0)}(E)\,dt
			\notag\\
			&=
			c_0(4C_{\mathrm{tc}})^{-q}\sigma^{2-q}
			\Vol_{\widehat g_\lambda(0)}(E).
		\end{align*}
		By the construction of $r_*$ in \eqref{eq:slice-tail-block-containment}, the result follows.
	\end{proof}
	
	\begin{lem}
		\label{lem:time-zero-noncollapse}
		Fix $C_0>0$.  There exist
		\[
		\eta_0=\eta_0(n,\Lambda,C_0)>0,
		\qquad
		c_0=c_0(n,\Lambda)>0
		\]
		with the following property.  For every fixed $L\ge1$, for all
		sufficiently large $\lambda$, if
		\[
		x\in\{f<C_0\lambda\}
		\]
		and
		\begin{equation}\label{eq:integrate-bound-of-scalar}
			\int_{B_{\widehat g_\lambda(0)}(x,2L)}
			R_{\widehat g_\lambda(0)}
			\,\dvol_{\widehat g_\lambda(0)}
			\le\eta_0,
		\end{equation}
		then
		\[
		\Vol_{\widehat g_\lambda(0)}
		B_{\widehat g_\lambda(0)}(x,L/2)
		\ge c_0L^n.
		\]
	\end{lem}
	
	\begin{proof}
		By \eqref{eq:rescaled-block-bound}, there exists $K_1=K_1(C_0)$ such that for all sufficiently large $\l$,
		\be
		0\le R_{\widehat g_\lambda(t)}\le K_1,
		\qquad
		\quad\text{on } B_{\widehat g_\lambda(0)}(x,2L).
		\label{4.14}
		\ee
		The Sobolev inequality of Li--Wang (\cite[Theorem 1.1]{LW20}) is invariant under constant
		rescaling and gives for some $S=S(n,\Lambda)$,
		\be
		\|u\|_{\frac{2n}{n-2}}^2
		\le
		S\left(
		4\int_M|\nabla u|_{\widehat g_\l(0)}^2\,\dvol_{\widehat g_\l(0)}
		+
		\int_MR_{\hat{g}_\l(0)}\,u^2\,\dvol_{\widehat g_\l(0)}
		\right),
		\qquad
		u\in C_c^\infty(M).
		\label{4.15}
		\ee

		Take $u\in C_c^\infty(B_{\widehat g_\lambda(0)}(x,2L))$. By H\"older inequality
		\be\label{eq: holder}
		\int_{B_{\widehat g_\lambda(0)}(x,2L)}R_{\hat{g}_\l(0)}\,u^2\,\dvol_{\widehat g_\l(0)}
		\le
		\|R_{\hat{g}_\l(0)}\|_{L^{n/2}({B_{\widehat g_\lambda(0)}(x,2L)})}
		\|u\|_{\frac{2n}{n-2}}^2.
		\ee
		Moreover, by \eqref{4.14}, \eqref{eq:integrate-bound-of-scalar}  and choosing $
		\eta_0
		:=
		(2S)^{-n/2}K_1^{1-n/2}.
		$
		\be\label{eq: L n/2 bound}
		S^{n/2}\int_{B_{\widehat g_\lambda(0)}(x,2L)}R_{\hat{g}_\l(0)}^{n/2}\,\dvol_{\widehat g_\l(0)}
		\le
		K_1^{n/2-1}
		\int_{B_{\widehat g_\lambda(0)}(x,2L)}R_{\hat{g}_\l(0)}\,\dvol_{\widehat g_\l(0)}\leq \frac{1}{2^n}.
		\ee

		By \eqref{eq: holder}, \eqref{eq: L n/2 bound} and absorbing the scalar-curvature term in \eqref{4.15} yields
		\be
		\|u\|_{\frac{2n}{n-2}}^2
		\le
		8S\int_{B_{\widehat g_\lambda(0)}(x,2L)}|\nabla u|^2\,\dvol_{\widehat g_\l(0)}
		,
		\qquad u\in C_c^\infty({B_{\widehat g_\lambda(0)}(x,2L)}).
		\ee
		It is standard that the inequality above implies 
		\[
		\Vol_{\widehat g_\l(0)} B_{\widehat g_\l(0)}(x,r)
		\ge c(n,S)r^n,\qquad 0<r\le L/2,
		\] see
		\cite[Theorem~3.1.5]{SaloffCoste2002}.
	\end{proof}

	\subsection{Proof of \Cref{thm: bad set}}
	
	We are now ready to prove \Cref{thm: bad set}.

	By \Cref{lem:slice-tail}, for every fixed
	\(L\geq \max\{A,1\}\), every \(x\in\{f<C_0\lambda\}\), and all sufficiently
	large \(\lambda\),
	\begin{equation}\label{eq:fixed-time-small-rm}
		\Vol_{\widehat g_\lambda}
		\Big(
		B_{\widehat g_\lambda}(x,L)
		\cap\{\widehat r_\lambda\leq A\}
		\Big)
		\leq
		C_{\mathrm{sm}} A^pL^{n-p},
	\end{equation}
	where \(C_{\mathrm{sm}}=C_{\mathrm{sm}}(n,\Lambda,A,p,C_0)\).
	
	Let \(\eta_0=\eta_0(n,\Lambda,C_0)>0\) be the constant in
	Lemma~\ref{lem:time-zero-noncollapse}. For \(x\in\{f<C_0\lambda\}\), define
	the energy
	\begin{equation}\label{eq:block-energy}
		\mathcal E_{\lambda,L}(x)
		:=
		\int_{B_{\widehat g_\lambda(0)}(x,2L)}
		R_{\widehat g_\lambda(0)}
		\,\dvol_{\widehat g_\lambda(0)}.
	\end{equation}
	
	Choose a maximal \(L\)-separated set
	$
	\{x_i\}_{i=1}^{N_\lambda}
	\subset \{f<C_0\lambda\}
	$
	with respect to \(\widehat g_\lambda(0)\). Then
	$
	\{f<C_0\lambda\}
	\subset
	\bigcup_i B_{\widehat g_\lambda(0)}(x_i,L),
	$
	and the balls
	$
	B_{\widehat g_\lambda(0)}(x_i,L/2)
	$
	are pairwise disjoint.
	
	We next verify that the integral domains appearing in
	\eqref{eq:block-energy}, associated with the points \(x_i\), have uniformly
	bounded overlap. Recall that \(L\) is fixed. By
	\Cref{lem:rescaled-scalar-bound}(1), there exist
	\(K_1=K_1(C_0)>0\) and \(C_1=C_1(C_0)>0\) such that
	\begin{equation}\label{eq:expanded-sublevel-scalar-bound}
		B_{\widehat g_\lambda(0)}(x_i,2L)
		\subset
		\Big\{
		z:
		R_{\widehat g_\lambda(0)}(z)\leq K_1,\;
		f(z)<C_1\lambda
		\Big\},
	\end{equation}
	provided that \(\lambda\) is sufficiently large.
	
	Choose the fixed radius
	\begin{equation}\label{eq:overlap-small-radius}
		r_0
		:=
		\min\Big\{
		4^{-1},\,
		K_1^{-1/2}
		\Big\}.
	\end{equation}
	By \Cref{prop:LW-volume-estimates},
	\begin{equation}\label{eq:overlap-small-ball-lower-bound}
		\Vol_{\widehat g_\lambda(0)}
		B_{\widehat g_\lambda(0)}(x_i,r_0)
		\geq
		c(n)e^{-\Lambda}r_0^n.
	\end{equation}
	Since \(L\geq1\) and the points \(x_i\) are \(L\)-separated, the balls
	$
	B_{\widehat g_\lambda(0)}(x_i,r_0)
	$
	are pairwise disjoint. The volume upper bound in
	\Cref{prop:LW-volume-estimates} then implies that every point is contained in
	at most \(M_L\) of the relevant enlarged balls, where
	$
	M_L
	:=
	\frac{C(n)(2L+r_0)^n}{c(n)e^{-\Lambda}r_0^n}.
	$
	
	We next estimate the total energy
	\(\sum_i\mathcal{E}_{\lambda,L}(x_i)\). By the uniformly bounded overlap and
	the estimate
	$
	\int_{\{f<s\}} R\,\dvol_g
	\leq
	\frac n2 F(s)
	$
	see \cite[\S~3]{CZ} or Proposition~\ref{prop:coarea} below, we have
	\begin{align}\label{eq: total energy}
		\sum_i\mathcal{E}_{\lambda,L}(x_i)
		&\leq
		M_L
		\int_{\{f<C_1\lambda\}}
		\widehat R_\lambda\,\dvol_{\widehat g_\lambda(0)}
		\leq
		C'\lambda^{n/2-1}F(\lambda),
	\end{align}
	where \(C'=C'(n,\Lambda,C_0,L)\).
	
	Call \(i\) \emph{high-energy} if
	\[
	\mathcal E_{\lambda,L}(x_i)>\eta_0,
	\]
	and \emph{low-energy} otherwise. For the high-energy indices,
	\eqref{eq: total energy} and Chebyshev's inequality give
	\[
	\#\{i:\text{\(i\) is high-energy}\}
	\leq
	C'\eta_0^{-1}\lambda^{n/2-1}F(\lambda).
	\]
	Since, by \Cref{prop:LW-volume-estimates},
	\[
	\Vol_{\widehat g_\lambda(0)}
	B_{\widehat g_\lambda(0)}(x_i,L)
	\leq
	C(n)L^n,
	\]
	we obtain, after rescaling,
	\begin{equation}\label{eq:high-energy-volume-g}
		\Vol_g
		\Big(
		\bigcup_{i\ {\rm high}}
		B_{\widehat g_\lambda(0)}(x_i,L)
		\Big)
		\leq
		C'C(n)L^{n}\eta_0^{-1}\lambda^{-1}F(\lambda)
		=
		o(F(\lambda)).
	\end{equation}
	
	We now consider the low-energy indices. By
	Lemma~\ref{lem:time-zero-noncollapse},
	\[
	\Vol_{\widehat g_\lambda(0)}
	B_{\widehat g_\lambda(0)}(x_i,L/2)
	\geq
	c_0L^n,
	\]
	where \(c_0=c_0(n,\Lambda)>0\). By
	\eqref{eq:expanded-sublevel-scalar-bound},
	\begin{equation}\label{eq:number-low-energy}
		\#\{i:\text{\(i\) is low-energy}\}\,c_0L^n
		\leq
		\Vol_{\widehat g_\lambda(0)}\{f<C_1\lambda\}
		\leq
		C''\lambda^{n/2}F(\lambda),
	\end{equation}
	where \(C''=C''(n,C_0)\).
	
	Using \eqref{eq:fixed-time-small-rm} and
	\eqref{eq:number-low-energy}, and rescaling, yields
	\begin{equation}\label{eq:low-energy-volume}
		\Vol_g
		\Big(
		\{f<C_0\lambda,\sqrt\lambda\,r_{\Rm}\leq A\}
		\cap
		\bigcup_{i\ {\rm low}}
		B_{\widehat g_\lambda(0)}(x_i,L)
		\Big)
		\leq
		C_{\mathrm{sm}} c_0^{-1}C''A^{q-2}L^{2-q}F(\lambda).
	\end{equation}
	
	Combining \eqref{eq:high-energy-volume-g} and
	\eqref{eq:low-energy-volume}, we obtain, for every fixed
	\(L\geq\max\{A,1\}\),
	\[
	\limsup_{\lambda\to\infty}
	\frac{
		\Vol_g\{f<C_0\lambda,\sqrt\lambda\,r_{\Rm}\leq A\}
	}{F(\lambda)}
	\leq
	C_{\mathrm{sm}} c_0^{-1}C'' A^{q-2}L^{2-q}.
	\]
	Finally, let \(L\to\infty\). Since \(q-2>0\), the right-hand side tends to
	zero. This proves Theorem~\ref{thm: bad set}.

	\section{A local Weyl criterion}\label{sec:analytic}
	In this section, we derive a  criterion, namely, \Cref{thm:averaged}, for the validity of the Weyl asymptotics \eqref{eq:Weyl law with scalar curvature}. This criterion shows that \Cref{thm:estimate} implies \Cref{thm:main}. 
	We first explore the sublevel geometry of Ricci shrinkers in \cref{sec:sublevels}. We then study heat kernel estimates and heat kernel asymptotics in \cref{subsec: heat kernel}, and use these asymptotics to prove a local Weyl asymptotic on the good set in \cref{subsec: local weyl good}. Finally, in \Cref{subsec: Weyl bad set}, we show that the contribution of the bad set is negligible, thereby establishing \Cref{thm:averaged}.
	
	If $M$ is compact, then \eqref{eq:Weyl law with scalar curvature} is the usual Weyl law for the
	elliptic operator $H_f$ on a compact manifold.  Hence we assume from
	now on that $M$ is noncompact.

	\subsection{Sublevel set geometry}\label{sec:sublevels}

	The weighted Laplacian $L_f$ contains a first-order drift term and acts on a
	weighted $L^2$ space, while the Weyl-law machinery that we will use is most
	naturally formulated for Schr\"odinger operators on the ordinary Riemannian
	measure.  
	
	The first step is therefore to conjugate $L_f$.  A direct computation shows that the map
	\[
	U:L^2(M,e^{-f}\dd\mathrm{vol}_g)\longrightarrow
	L^2(M,\dd\mathrm{vol}_g),
	\qquad Uu=e^{-f/2}u,
	\]
	is unitary.  Let $H_f:=UL_fU^{-1}$. Then $H_f$ is a Schr\"odinger operator.
	That is,
	\be\label{eq:conjugation}
	H_f=\Delta+V_f,
	\qquad
	V_f=\frac14(f+R-n).
	\ee
	This is because, for $v\in C_c^\infty(M)$, the product rule 
	gives
	\[
	e^{-f/2}L_f(e^{f/2}v)
	=\Delta v+
	\left(\frac14|\nabla f|^2+\frac12\Delta f\right)v.
	\]
	Taking the trace of \eqref{eq:shrinker} in our sign convention yields
	$\Delta f=R-n/2$.  Since $|\nabla f|^2=f-R$, this proves
	\eqref{eq:conjugation}.

	After the conjugation above, it suffices to study Weyl law for Schr\"odinger operator
	$
	H_f$
	on $(M,g).$
	We will give quantitative control
	of three geometric objects: the size $\sigma(\l)$ of the classically allowed region
	$\{V_f<\lambda\}$, the phase integral $\Phi_f(\l)$ over that region, and the thin ``annular region” where $V_f$ is close to $\lambda$.
	
	Set
	\[
	\Omega_s=\{f<s\}\quad\text{and}\quad
	S(s)=\int_{\Omega_s}R\dd\mathrm{vol}_g.
	\]
	
	The following simple fact is from Cao-Zhou, and will be used later. 
	
	\begin{prop}[$\S$3 in Cao-Zhou \cite{CZ}]
		\label{prop:coarea}
		For a.e. $s>0$,
		\be\label{eq:coarea}
		sF'(s)-S'(s)=\frac n2F(s)-S(s).
		\ee
		Moreover,
		\be\label{eq:S-bound}
		0\leq S(s)\leq\frac n2F(s),
		\ee
		and whenever $h>0$ and
		$\l-\frac n2-(\frac n2+1)h>0$,
		\be\label{eq:shell}
		\frac{F(\l)-F(\l-h)}{F(\l)}
		\leq\frac{n(1+h)}{2(\l-h)}.
		\ee
		In particular, for every fixed $A>1$ there are constants $C_A,s_A>0$
		such that
		\be\label{eq:doubling}
		F(As)\leq C_AF(s),\qquad\forall s\geq s_A.
		\ee
	\end{prop}
	
	\begin{proof}
		At a regular value $s$, coarea and the divergence theorem give
		\begin{align*}
			sF'(s)-S'(s)
			&=\int_{\partial \Omega_s}\frac{s-R}{|\nabla f|}\dd A
			=\int_{\partial \Omega_s}|\nabla f|\dd A\\
			&=\int_{\Omega_s}\operatorname{div}(\nabla f)\dd\mathrm{vol}_g
			=\frac n2F(s)-S(s).
		\end{align*}
		Here we used $0\leq s-R=|\nabla f|^2$ on $\partial \Omega_s$ and
		$\operatorname{div}\nabla f=n/2-R$.  The non-negativity of $|\nabla f|$ and $R$ and the equation above prove \eqref{eq:S-bound}.

		Integrating \eqref{eq:coarea} from $a$ to $b=a+h$ yields
		\be\label{eq:integrated-coarea}
		bF(b)=aF(a)+\left(\frac n2+1\right)\int_a^bF(s)\dd s
		-\int_a^bS(s)\dd s+S(b)-S(a).
		\ee
		Using monotonicity of $F$, nonnegativity of $S$, and
		\eqref{eq:S-bound}, we obtain
		\[
		\left[b-\frac n2-\left(\frac n2+1\right)h\right]F(b)
		\leq aF(a).
		\]
		Taking $b=\l$ and $a=\l-h$ gives \eqref{eq:shell}.  
	\end{proof}
	
	Recall that
	\[
	\sigma(\lambda):=\Vol_g\{V_f<\lambda\}.
	\]
	
	The  potential $V_f$ contains the scalar-curvature term $R$, so
	its sublevel sets are not the same as those of $f$.  However, we will see that the extra term $R$ changes the volume of
	sublevel set by an $o(1)$ amount. 
	
	\begin{prop}\label{prop:potential-sublevels}
		As $\lambda\to\infty$,
		\be\label{eq:sigma-F}
		\sigma(\lambda)
		=F(4\lambda+n)\bigl(1+O(\lambda^{-1/2})\bigr).
		\ee
	\end{prop}
	
	\begin{proof}
		
		Note that $\{V_f<\l\}=\{f+R<4\l+n\}$. We set $\tilde\l=4\lambda+n$ and let
		$
		h:=\sqrt{\tilde{\l}}.
		$
		Since $R\geq0$,
		$\{f+R<{\tilde{\l}}\}\subset \Omega_{\tilde{\l}}$.  A point of $\Omega_{{\tilde{\l}}-h}$ outside
		$\{f+R<{\tilde{\l}}\}$ satisfies $R\geq h$.  Therefore by \Cref{prop:coarea},
		\begin{align*}
			0\leq F({\tilde{\l}})-\sigma(\lambda)
			&\leq F({\tilde{\l}})-F({\tilde{\l}}-h)
			+\Vol_g(\Omega_{{\tilde{\l}}-h}\cap\{R\geq h\})\\
			&\leq \frac{n(1+h)}{2({\tilde{\l}}-h)}F({\tilde{\l}})+\frac{S({\tilde{\l}})}h
			=O({\tilde{\l}}^{-1/2})F({\tilde{\l}}),
		\end{align*}
		
	\end{proof}
	
	We can now give a direct proof of \Cref{thm:main} under the bounded geometry assumption, without using \Cref{thm:estimate}.

	\begin{thm}
		\label{thm:main-bounded}
		Let $(M^n,g,f)$ be a complete noncompact Ricci shrinker with bounded geometry. That is, there exist $C_k,\delta>0$ such that
		\[
		|\nabla^k \Rm_g|\leq C_k
		\quad\text{and}\quad
		\inj_g\geq \delta,\quad k=0,1,2.
		\]
		Then
		\[
		N_f(\lambda)
		\sim
		(2\pi)^{-n}\omega_n
		\int_M(\lambda-V_f)_+^{n/2}\,\dvol_g,
		\quad\l\to+\infty.
		\]
	\end{thm}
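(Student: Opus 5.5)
Under bounded geometry, $\rho$ is bounded below by a positive constant $\rho_0=\rho_0(C_0,C_1,C_2,\delta)$. Indeed, take $r\le\min\{1,\delta\}$ small enough that $r^{2+k}C_k\le1$ for $k=0,1,2$; then every $y\in B_g(x,r)$ satisfies the defining conditions in \eqref{eq:defn rho}. Consequently $\sqrt\lambda\,\inf_M\rho\ge\rho_0\sqrt\lambda\to\infty$, which is exactly the uniform condition \eqref{eq:expected to hold}. So the plan is to verify the hypotheses of the Braverman--Dai--Yan criterion \cite{BDY} for $H_f=\Delta+V_f$ from \eqref{eq:conjugation}, and to apply it directly without going through \Cref{thm:estimate}.

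That criterion asks for three inputs. The first is bounded geometry at scale $r(\lambda)$ on the classically allowed region $\{V_f<C\lambda\}$ with $\sqrt\lambda\,r(\lambda)\to\infty$; we take $r(\lambda)=\rho_0$. The second is that $V_f$ oscillates by $o(\lambda)$ on balls of radius $r(\lambda)$ centred in that region. The third is that the phase-space volume grows regularly, so that replacing $\lambda$ by $\lambda(1\pm\epsilon)$ changes $\Phi_f$ by a factor tending to $1$ as $\epsilon\to0$, and the thin region where $V_f$ is close to $\lambda$ is negligible.

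For the oscillation, we use $|\nabla\sqrt f|\le 1/2$ from $|\nabla f|^2\le f$, together with $|\nabla R|\le C$, which follows from $|\nabla \Rm|\le C_1$. Thus on a ball of radius $\rho_0$ around a point with $f\lesssim\lambda$, the function $f$ varies by $O(\sqrt\lambda)$ and $R$ by $O(1)$, so $\osc V_f=O(\sqrt\lambda)=o(\lambda)$. Moreover $R\le C_0'$ is bounded, so $V_f=\frac14(f-n)+O(1)$; together with \Cref{prop:potential-sublevels} this relates $\sigma$ and $\Phi_f$ to $F$. The regular-growth input comes from \Cref{prop:coarea}. The doubling estimate \eqref{eq:doubling} gives $\Phi_f(2\lambda)\le C\Phi_f(\lambda)$. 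The shell estimate \eqref{eq:shell}, integrated against $(\lambda-V_f)_+^{n/2}$ in the same way as in the proof of \Cref{cor:constant}, gives
\[
\Phi_f(\lambda+o(\lambda))=(1+o(1))\,\Phi_f(\lambda)
\]
and shows that the contribution of $\{|V_f-\lambda|\le\epsilon\lambda\}$ is $O(\epsilon)\Phi_f(\lambda)$.

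The main obstacle is matching precisely the form of the hypotheses in \cite{BDY}, in particular any condition requiring the potential's growth to dominate the geometry in a quantitative way. If that criterion is formulated via a Dirichlet--Neumann bracketing on a cover of $\{V_f<C\lambda\}$ by balls of radius $\rho_0$, the key quantities are the number of such balls, which is $\asymp F(C\lambda)$ by the volume bounds coming from bounded geometry, and the bracketing error per ball. That error is of order $\lambda^{(n-1)/2}\rho_0^{\,n-1}$ against a leading term of order $\lambda^{n/2}\rho_0^{\,n}$, so the relative error is $O\big((\sqrt\lambda\rho_0)^{-1}\big)+O(\osc V_f/\lambda)\to0$. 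Outside $\{V_f<C\lambda\}$ with $C>1$, Agmon-type or min--max estimates show there is no contribution to $N_f(\lambda)$. Summing over the cover and using the regular growth of $\Phi_f$ then yields $N_f(\lambda)\sim\Phi_f(\lambda)$.
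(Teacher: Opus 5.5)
Your proposal is correct and follows essentially the same route as the paper: the paper's proof likewise only checks that $\sigma(2\lambda)\le C\sigma(\lambda)$ for large $\lambda$ (via \Cref{prop:coarea} and \Cref{prop:potential-sublevels}) and that $V_f$ has sublinear oscillation (via $|\nabla f|^2\le f$ and bounded $|\nabla R|$), and then invokes an existing bounded-geometry Weyl criterion. The only difference is that the paper cites \cite[Proposition A.1 and Theorem 1.10]{WeylDY} rather than \cite{BDY}, so the Dirichlet--Neumann bracketing fallback in your last paragraph is not needed.
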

	
	\begin{proof}
		It follows from \Cref{prop:coarea} and \Cref{prop:potential-sublevels} that
		$V_f$ satisfies the doubling condition:$$\sigma(2\l)\leq C\sigma(\l),\quad \text{$\l$ is sufficiently large}.$$ The theorem then
		follows from \cite[Proposition A.1 and Theorem 1.10]{WeylDY} together with the fact that
		$|\nabla f|^2\leq f$ and  $|\nabla R|$ is bounded.
	\end{proof}

	Define the comparison phase integral
	\[
	\Psi_f(\lambda)
	:=(2\pi)^{-n}\omega_n
	\int_M\left(\lambda-\frac{f-n}{4}\right)_+^{n/2}
	\dd\mathrm{vol}_g.
	\]
	
	The preceding proposition compares the \emph{volumes} of the two kinds of
	sublevel sets.  
	We next extend this comparison to the full phase-space integrals
	$\Phi_f(\l)$ and $\Psi_f(\l)$.

	\begin{prop}
		\label{prop:phase-comparison}
		For every complete noncompact shrinker,
		\be\label{eq:phase-comparison}
		\Phi_f(\lambda)\sim\Psi_f(\lambda),\quad \l\to+\infty.
		\ee
		Furthermore, for all sufficiently large $\lambda$,
		\be\label{eq:phase-lower}
		\Phi_f(\lambda)\geq c\lambda^{n/2}F(\lambda).
		\ee
		
	\end{prop}
	
	\begin{proof}
		
		Put $p=n/2$. 
		For any proper smooth potential $W$ bounded from below, Tonelli's theorem gives
		\be\label{eq: Tonelli}
		\int_M(\lambda-W)_+^p\dd\mathrm{vol}_g
		=p\int_{\inf W}^{\lambda}(\lambda-s)^{p-1}
		\Vol_g\{W<s\} \dd s.
		\ee
		By \Cref{prop:potential-sublevels}, for any $\ep>0$, there exists
		$\l_0=\l_0(\ep)$ such that, whenever $s>\l_0$,
		\[		(1-\ep)F(4s+n)\leq \sigma(s)\leq (1+\ep)F(4s+n).
		\]
		Moreover, since
		\[
		\lim_{\l\to\infty}\int_{\l_0}^{\lambda}(\lambda-s)^{p-1}
		\Vol_g\{W<s\} \dd s=\infty
		\]
		for $W=V_f$ or $W=\tilde{V}_f$, there exists
		$\l_1=\l_1(\ep)>0$ such that, whenever $\l\geq\l_1$,
		\[\ba
		p\int_{\inf V_f}^{\lambda_0}(\lambda-s)^{p-1}
		\sigma(s) \dd s
		&\leq \ep p\int_{\l_0}^{\lambda}(\lambda-s)^{p-1}
		\sigma(s) \dd s,\\
		p\int_{\inf \tilde{V}_f}^{\lambda_0}(\lambda-s)^{p-1}
		F(4s+n) \dd s
		&\leq \ep p\int_{\l_0}^{\lambda}(\lambda-s)^{p-1}
		F(4s+n) \dd s.
		\ea\]
		Thus, whenever $\l\geq\l_1$,
		\[\ba
		&\quad\int_M(\lambda-V_f)_+^p\dd\mathrm{vol}_g
		=p\int_{\inf V_f}^{\lambda}(\lambda-s)^{p-1}
		\sigma(s) \dd s\\
		&\leq (1+\ep)p\int_{\l_0}^{\lambda}(\lambda-s)^{p-1}
		\sigma(s) \dd s\leq (1+\ep)^2p\int_{\l_0}^{\lambda}(\lambda-s)^{p-1}
		F(4s+n) \dd s\\
		&\leq \frac{(1+\ep)^2}{1-\ep}
		p\int_{\inf \tilde{V}_f}^{\lambda}(\lambda-s)^{p-1}
		F(4s+n) \dd s=\frac{(1+\ep)^2}{1-\ep}
		\int_M(\lambda-\tilde{V}_f)_+^p\dd\mathrm{vol}_g.
		\ea\]
		Therefore, for $\l\geq\l_1$,
		\[
		\Phi_f(\l)\leq \frac{(1+\ep)^2}{1-\ep}\Psi_f(\l).
		\]
		Similarly, one obtains, for $\l\geq\l_1$,
		\[
		\Phi_f(\l)\geq \frac{(1-\ep)^2}{1+\ep}\Psi_f(\l),
		\]
		which proves \eqref{eq:phase-comparison}.
		
		To prove \eqref{eq:phase-lower}, by \eqref{eq: Tonelli}, for all sufficiently
		large $\l$, we have
		\[\ba
		&\quad\int_M(\lambda-V_f)_+^p\dd\mathrm{vol}_g
		\geq p\int_{\l/4}^{\lambda/2}(\lambda-s)^{p-1}
		\sigma(s) \dd s\geq p2^{-p+1}\l^p\sigma(\l/4)
		\geq C'\l^pF(\l).
		\ea\]

	\end{proof}
	
	The following estimate is also needed.  
	
	\begin{lem}\label{lem:turning-layer}
		For every sufficiently small fixed $\delta>0$,
		\[
		\Vol_g\{(1-\delta)\lambda<V_f\leq(1+\delta)\lambda\}
		\leq C(\delta+\lambda^{-1/2})F(\lambda)
		\]
		for all sufficiently large $\lambda$.  The constant is uniform for
		$\delta$ in a fixed sufficiently small interval.
	\end{lem}
	
	\begin{proof}
		This follows from \eqref{eq:sigma-F} and 
		\eqref{eq:shell} easily.
	\end{proof}

	\subsection{Heat kernel estimates and heat kernel asymptotics}\label{subsec: heat kernel}

	\begin{prop}[Gaussian heat-kernel bound]
		\label{prop:heat-kernel}
		Let $n\geq3$ and $K_f(t,x,y)$ denote the heat kernel of
		$$
		H_f=\Delta+V_f,
		\qquad
		V_f=\frac14(f+R-n).
		$$
		Then there exist a constant $C>0$ such that for all $x,y\in M$ and $0<t\leq1$,
		\be
		\label{eq:global-gaussian}
		0\leq K_f(t,x,y)
		\leq
		Ct^{-n/2}
		\exp\left(-\frac{d_g(x,y)^2}{8t}\right).
		\ee
	\end{prop}
	
	\begin{proof}The proof is standard.
		For any elliptic operator $L$, let $K_L$ denote its heat kernel.
		
		Li--Wang's Sobolev inequality \cite[Theorem 1.1]{LW20} states that 
		\[
		\|u\|_{2n/(n-2)}^2
		\leq C\int_M(4|\nabla u|^2+Ru^2)\dd\mathrm{vol}_g,
		\qquad u\in C_c^\infty(M).
		\]
		For $A=4\Delta+R$, H\"older interpolation gives the Nash inequality
		$
		\|u\|_2^{2+4/n}
		\leq C\langle Au,u\rangle\|u\|_1^{4/n}.
		$
		The standard semigroup argument yields (c.f. \cite{varopoulos1985hardy})
		$K_A(t,x,x)\leq Ct^{-n/2}$.
		Grigor'yan's on-diagonal-to-off-diagonal argument
		\cite[Theorem~1.1]{GrigoryanGaussian}
		then gives
		\be
		\label{eq:A-gaussian}
		K_{A/4}(t,x,y)
		\leq
		Ct^{-n/2}
		\exp\left(-\frac{d_g(x,y)^2}{8t}\right),
		\qquad 0<t\leq1.
		\ee
		Indeed, Grigor'yan's argument is based on integral estimates for positive
		subsolutions and applies without change to the heat equation associated
		with $4\Delta+R$, since $R\geq0$.
		
		Since
		$
		4H_f+n=A+f
		$
		and $f\geq0$, it follows from the maximum principle and \eqref{eq:A-gaussian} that
		\[
		K_{H_f}(t,x,y)\leq Ct^{-n/2}
		\exp\left(\frac{nt}{4}-\frac{d_g(x,y)^2}{8t}\right),
		\qquad 0<t\leq1.
		\]
	\end{proof}
	As a direct consequence of \Cref{prop:heat-kernel}, we obtain the following.
	\begin{prop}
		\label{prop:projector-bound}
		Let
		$
		P_\lambda=\mathbf 1_{(-\infty,\lambda)}(H_f)
		$
		and let $e_\lambda(x,y)$ be its integral kernel.  Then there exists
		$C>0$ such that
		\be
		\label{eq:projector-bound}
		e_\lambda(x,x)\leq C\lambda^{n/2}
		\ee
		for every $x\in M$ and $\lambda\geq1$.
	\end{prop}
	
	\begin{proof}
		Writing the heat kernel and the spectral projector in terms of an
		orthonormal eigenbasis of $H_f$, for every $t>0$,
		
		$$
		e_\lambda(x,x)
		=
		\sum_{\lambda_j<\lambda}|\phi_j(x)|^2
		\leq
		e^{t\lambda}
		\sum_j e^{-t\lambda_j}|\phi_j(x)|^2
		=
		e^{t\lambda}K_f(t,x,x).
		$$
		By Proposition~\ref{prop:heat-kernel},
		$
		K_f(t,x,x)\leq Ct^{-n/2},
		0<t\leq1.
		$
		Taking $t=\lambda^{-1}$ proves \eqref{eq:projector-bound}
		
	\end{proof}

	\begin{lem}
		\label{lem:local-heat-stability}
		Let $(M^n,g)$ be complete, and let
		\[
		H=\Delta+W,
		\qquad
		W\in C^\infty(M),
		\]
		where $W$ is bounded from below.
		
		Assume that its heat kernel satisfies the Gaussian estimate
		\be\label{eq:Gaussian-assumption}
		|K_H(t,y,z)|
		\leq C_0t^{-n/2}
		\exp\left(-\frac{d_g(y,z)^2}{C_0t}\right),
		\qquad 0<t\leq1.
		\ee
		Fix $x\in M$ and $L\geq2$, and suppose that
		\[
		\inf_{B_g(x,2L)}\inj_g\geq4L.
		\]
		Set
		\[
		\Theta_L
		:=
		\sum_{k=0}^2
		L^{2+k}
		\sup_{B_g(x,2L)}|\nabla^k\Rm_g|,
		\qquad
		\Omega_L
		:=
		\osc_{B_g(x,2L)}W.
		\]
		There exist $\epsilon_0,c,C>0$, depending only on $n$, $C_0$,
		and an upper bound for $\sup_{y\in B_g(x,2L)}|W(y)|$, such that if
		$\Theta_L\leq\epsilon_0$, then for every $0<t\leq1$,
		\be\label{eq:local-heat-stability}
		\begin{aligned}
			&
			\left|
			K_H(t,x,x)
			-(4\pi t)^{-n/2}e^{-tW(x)}
			\right|
			\leq
			Ct^{-n/2}
			\left(
			\frac{t}{L^2}\Theta_L
			+t\Omega_L
			+\exp\left(-\frac{cL^2}{t}\right)
			\right).
		\end{aligned}
		\ee
	\end{lem}
	
	\begin{proof}
		The proof follows the local parametrix argument in
		\cite[Propositions~3.15--3.16 and Theorem~3.17]{BDY};
		we keep track of the dependence on the local geometry and on the
		oscillation of the potential.
		
		Since $\inj_g\geq4L$ on $B_g(x,2L)$ and $\Theta_L$ is sufficiently
		small, normal coordinates centered at $x$ are available on
		$B_g(x,2L)$.  If
		$
		G(x,y):=\det(g_{ij}(y))
		$
		in these coordinates, the Jacobi-field estimates and their
		differentiated versions give
		\be\label{eq:G-small}
		\left|\Delta_yG^{-1/4}(x,y)\right|
		\leq
		C L^{-2}\Theta_L,
		\qquad y\in B_g(x,L).
		\ee
		Here and below $C$ depends only on the quantities indicated in the
		statement.
		
		Let $\chi\in C_c^\infty(B_g(x,L))$ satisfy
		\[
		\chi=1\quad\text{on }B_g(x,L/2),
		\qquad
		|\nabla\chi|\leq CL^{-1},
		\qquad
		|\Delta\chi|\leq CL^{-2},
		\]
		and consider the zeroth-order parametrix
		\[
		Q(t,x,y)
		:=
		\chi(y)(4\pi t)^{-n/2}
		e^{-tW(x)}
		\exp\left(-\frac{d_g(x,y)^2}{4t}\right)
		G(x,y)^{-1/4}.
		\]
		As in \cite[Proposition~3.15]{BDY}, before introducing the cutoff one
		has
		\[\ba
		&\quad(\partial_t+H)_y
		\left[
		(4\pi t)^{-n/2}
		e^{-tW(x)}
		e^{-d_g(x,y)^2/(4t)}
		G(x,y)^{-1/4}
		\right]\\
		&=
		(4\pi t)^{-n/2}
		e^{-tW(x)}
		e^{-d_g(x,y)^2/(4t)}
		\left(
		\Delta_yG^{-1/4}(x,y)
		+(W(y)-W(x))G(x,y)^{-1/4}
		\right).\ea
		\]
		Hence \eqref{eq:G-small} and the definition of $\Omega_L$ give, 	on $B_g(x,L/2)$,
		\[
		|(\partial_t+H)Q(t,x,y)|
		\leq
		Ct^{-n/2}e^{-d_g(x,y)^2/(8t)}
		\left(
		L^{-2}\Theta_L+\Omega_L
		\right).
		\]
		The terms produced by differentiating the cutoff are supported in
		$
		C_L:= B_g(x,L)\setminus B_g(x,L/2).
		$
		Using
		$
		d_g(x,y)\geq L/2
		$
		on this set, the polynomial factors in $L^{-1}$ and $t^{-1}$ can be
		absorbed into the Gaussian, yielding
		\[
		|(\partial_t+H)Q(t,x,y)|\big|_{C_L}
		\leq
		Ct^{-n/2}
		\exp\left(-\frac{cL^2}{t}\right).
		\]

		Finally, Duhamel's formula gives
		\[
		K_H(t,x,x)-Q(t,x,x)
		=
		-\int_0^t\int_M
		K_H(s,x,y)
		(\partial_t+H)Q(t-s,x,y)
		\,\dvol_g(y)\,ds.
		\]
		Combining \eqref{eq:Gaussian-assumption} with the preceding estimates
		and using the standard convolution estimate for two Gaussians gives
		\[
		|K_H(t,x,x)-Q(t,x,x)|
		\leq
		Ct^{-n/2}
		\left(
		\frac{t}{L^2}\Theta_L
		+t\Omega_L
		+e^{-cL^2/t}
		\right).
		\]
		Since $\chi(x)=1$ and $G(x,x)=1$,
		$
		Q(t,x,x)=(4\pi t)^{-n/2}e^{-tW(x)},
		$
		which proves \eqref{eq:local-heat-stability}.
	\end{proof}
	
	\subsection{Local Weyl asymptotics on good sets}\label{subsec: local weyl good}
	We now establish the following local Weyl asymptotics on the good set.
	
	\begin{lem}\label{lem:local-density}
		Fix $\delta\in(0,1)$.  Suppose that we have a family of $G_\lambda\subset M$ such that
		\begin{enumerate}[(1)]
			\item $
			G_\lambda\subset\{V_f\leq(1-\delta)\lambda\};
			$
			\item $
			\inf_{x\in G_\lambda}\sqrt\lambda\,\rho(x)\longrightarrow\infty,\quad\lambda\to\infty.
			$
		\end{enumerate}
		Then, uniformly for $x\in G_\lambda$,
		\[
		e_\lambda(x,x)
		=(2\pi)^{-n}\omega_n(\lambda-V_f(x))^{n/2}+o(\lambda^{n/2}).
		\]
	\end{lem}

	\begin{proof}
		We argue by contradiction. Suppose that the conclusion is not uniform.
		Then there exist $\lambda_j\to\infty$ and $x_j\in G_{\lambda_j}$ such
		that for some $\delta>0$
		\[
		\lambda_j^{-n/2}
		\left|
		e_{\lambda_j}(x_j,x_j)
		-
		(2\pi)^{-n}\omega_n
		\bigl(\lambda_j-V_f(x_j)\bigr)^{n/2}
		\right|\ge\delta.
		\]
		Set
		\[
		g_j:=\lambda_jg,
		\qquad
		W_j:=\lambda_j^{-1}V_f,
		\qquad
		m_j:=\sqrt{\lambda_j}\,\rho(x_j).
		\]
		By assumption, $m_j\to\infty$. After passing to a subsequence, we may
		assume that
		\[
		W_j(x_j)\longrightarrow v
		\qquad\text{for some }v\in[0,1-\delta].
		\]
		By the condition (2),  for every fixed $L<\infty$ and all sufficiently
		large $j$,
		\[
		\bigl|\nabla^k\Rm_{g_j}\bigr|
		\leq C m_j^{-2-k},
		\qquad 0\leq k\leq2,
		\qquad
		\inj_{g_j}\geq m_j\quad\text{	on $B_{g_j}(x_j,2L)$.}
		\]
		The rescaled potential becomes constant on every fixed ball. Indeed,
		since
		\[
		V_f(x_j)\leq(1-\delta)\lambda_j
		\]
		and $R\geq0$, one has $f(x_j)=O_\delta(\lambda_j)$. Using
		$|\nabla\sqrt f|\leq\frac12$ together with
		$
		|\nabla R|\leq C\rho(x_j)^{-3}
		$
		on a slightly smaller regularity ball, we obtain, for every fixed
		$L<\infty$,
		\be
		\osc_{B_{g_j}(x_j,L)}W_j
		\leq
		C_{\delta,L}\lambda_j^{-1}
		+
		C_Lm_j^{-3}
		\longrightarrow0.
		\ee
		Let
		\[
		H_j:=\Delta_{g_j}+W_j=\lambda_j^{-1}H_f.
		\]
		Under the rescaling $g_j=\lambda_jg$, the global Gaussian estimate
		\eqref{eq:global-gaussian} becomes
		\[
		K_{H_j}(t,x,y)
		\leq
		Ct^{-n/2}
		\exp\left(-\frac{d_{g_j}(x,y)^2}{8t}\right),
		\qquad 0<t\leq1,
		\]
		with a constant $C$ independent of $j$.

		Applying Lemma~\ref{lem:local-heat-stability}, using the bounds above and
		the fact that $W_j(x_j)\to v$, and then letting $j\to\infty$ followed by
		$L\to\infty$, we obtain
		\[
		K_{H_j}(t,x_j,x_j)
		\longrightarrow
		(4\pi t)^{-n/2}e^{-tv},
		\qquad \text{for any fixed } t\in(0,1).
		\]
		Equivalently,
		\be\label{eq: convergence of heat kernel in sequence}
		\lambda_j^{-n/2}
		K_{H_f}\left(\frac{t}{\lambda_j},x_j,x_j\right)
		\longrightarrow
		(4\pi t)^{-n/2}e^{-tv}, \forall t\in(0,1).
		\ee
		Define positive locally finite measures on $[0,\infty)$ by
		\[
		\mu_j([0,s))
		:=
		\lambda_j^{-n/2}e_{\lambda_js}(x_j,x_j).
		\]
		By the spectral theorem,
		\[
		\int_{[0,\infty)}e^{-ts}\,d\mu_j(s)
		=
		\lambda_j^{-n/2}
		K_{H_f}\left(\frac{t}{\lambda_j},x_j,x_j\right).
		\]
		Therefore, \eqref{eq: convergence of heat kernel in sequence} gives
		\be
		\int_{[0,\infty)}e^{-ts}\,d\mu_j(s)
		\longrightarrow
		(4\pi t)^{-n/2}e^{-tv}.
		\ee
		
		Let $\mu_v$ be the positive measure determined by
		\[
		\mu_v([0,s))
		=
		(2\pi)^{-n}\omega_n(s-v)_+^{n/2}.
		\]
		Its Laplace transform is
		\[
		\int_{[0,\infty)}e^{-ts}\,d\mu_v(s)
		=
		(4\pi t)^{-n/2}e^{-tv}.
		\]
		Moreover, \Cref{prop:projector-bound} gives, for every $S<\infty$,
		$
		\sup_j\mu_j([0,S])<\infty.
		$
		The Gaussian heat-kernel estimate also gives, for every fixed $t\in(0,1)$,
		\[
		\sup_j
		\int_{[0,\infty)}e^{-ts}\,d\mu_j(s)
		=
		\sup_j
		\lambda_j^{-n/2}
		K_{H_f}\left(\frac{t}{\lambda_j},x_j,x_j\right)
		<\infty.
		\]
		Consequently, the sequential form of the semiclassical Tauberian
		theorem \cite[Theorem~1.7]{WeylDY}, applied with $\alpha=0$, yields
		\[
		\lambda_j^{-n/2}e_{\lambda_j}(x_j,x_j)
		\longrightarrow
		(2\pi)^{-n}\omega_n(1-v)^{n/2}.
		\]
		On the other hand,
		\[
		\lambda_j^{-n/2}
		(2\pi)^{-n}\omega_n
		\bigl(\lambda_j-V_f(x_j)\bigr)^{n/2}
		=
		(2\pi)^{-n}\omega_n
		\bigl(1-W_j(x_j)\bigr)^{n/2}
		\longrightarrow
		(2\pi)^{-n}\omega_n(1-v)^{n/2}.
		\]
		This contradicts the choice of $(\lambda_j,x_j)$ and proves that,
		uniformly for $x\in G_\lambda$,
		\[
		e_\lambda(x,x)
		=
		(2\pi)^{-n}\omega_n
		\bigl(\lambda-V_f(x)\bigr)^{n/2}
		+
		o(\lambda^{n/2}).
		\]
	\end{proof}
	
	\subsection{Integral of $e_\l(x,x)$ over the bad set}\label{subsec: Weyl bad set}

	We now analyze the integral of $e_\l(x,x)$ over the bad set. We first establish the following estimate.

	\begin{prop}\label{prop:high-R}
		For every fixed $C_0,\epsilon>0$,
		\be\label{eq:high-R}
		\int_{\{f<C_0\lambda,\ R\geq\epsilon\lambda\}}
		e_\lambda(x,x)\dd\mathrm{vol}_g(x)
		=o(\Phi_f(\lambda)).
		\ee
	\end{prop}
	
	\begin{proof}
		By \eqref{eq:S-bound},
		\[
		\Vol_g\{f<C_0\lambda,\ R\geq\epsilon\lambda\}
		\leq\frac{S(C_0\lambda)}{\epsilon\lambda}
		\leq\frac{n}{2\epsilon\lambda}F(C_0\lambda).
		\]
		Combine this with \eqref{eq:projector-bound},
		\eqref{eq:doubling}, and \eqref{eq:phase-lower}.  The ratio of the left
		side of \eqref{eq:high-R} to $\Phi_f(\lambda)$ is
		$O((\epsilon\lambda)^{-1})$.
	\end{proof}

	\begin{lem}\label{lem:agmon}
		Fix $\delta\in(0,1)$.  There are $c,C>0$ such that, for every sufficiently
		large $\lambda$ and every  eigenfunction
		$H_fu=\nu u$ with $\nu\leq\lambda$,
		\be\label{eq:agmon}
		\int_{\{f\geq4(1+\delta/2)\lambda+n\}}|u|^2\dd\mathrm{vol}_g
		\leq C\delta^{-1}e^{-c\delta^{3/2}\lambda}\int_M|u|^2\dvol_g.
		\ee
		Consequently,
		\[
		\int_{\{f\geq4(1+\delta/2)\lambda+n\}}e_\l(x,x)\dd\mathrm{vol}_g
		=o(\Phi_f(\l)).
		\]

	\end{lem}
	
	\begin{proof}
		
		The proof is a standard Agmon estimate argument; see \cite[Lemma 3.1]{DY2020cohomology} or \cite{agmon2014lectures} for details. We only outline the proof here.
		
		Set
		\[
		a_\lambda=4(1+\delta/4)\lambda+n,\qquad
		\gamma^2=\frac{\delta}{32(1+\delta)},\qquad
		\phi=\gamma(f-a_\lambda)_+.
		\]
		We first use the bounded Lipschitz truncations
		\[
		\phi_N=\min\{\phi,N\}.
		\]
		Since the gradient of $\phi_N$ is supported in a compact $f$-sublevel set,
		$e^{\phi_N}u\in\mathrm{Dom}(H_f)$. Using $e^{2\phi_N}u$ as a test function
		and integrating by parts, we obtain
		\be\label{eq:agmon-identity}
		0=\int_M\Big(
		|\nabla(e^{\phi_N}u)|^2+
		(V_f-\nu-|\nabla\phi_N|^2)e^{2\phi_N}|u|^2
		\Big)\dd\mathrm{vol}_g.
		\ee
		On $\{f\geq a_\lambda\}$, the inequalities
		$V_f\geq(f-n)/4$ and $|\nabla f|^2\leq f$ imply
		\[
		V_f-\nu-|\nabla\phi_N|^2\geq c\delta\lambda.
		\]
		On $\{f<a_\lambda\}$, the same coefficient is bounded below by
		$-(\lambda+n/4)$. Dropping the nonnegative gradient term in
		\eqref{eq:agmon-identity}, we obtain
		\[
		\int_{\{f\geq a_\lambda\}}e^{2\phi_N}|u|^2
		\dd\mathrm{vol}_g
		\leq C\delta^{-1}\int_{\{f<a_\lambda\}}|u|^2\dd\mathrm{vol}_g
		\leq C\delta^{-1}\int_M|u|^2\dd\mathrm{vol}_g.
		\]
		Since the integrands are nonnegative, monotone convergence applies as
		$N\to\infty$. On $\{f\geq4(1+\delta/2)\lambda+n\}$, we have
		$\phi\geq\gamma\delta\lambda$, which proves \eqref{eq:agmon}.

		Assume that $\l$ is sufficiently large and sum \eqref{eq:agmon} over $\nu<\lambda$. Fei He's estimate \cite[Theorem 1.2]{He} implies
		$
		N_f(\lambda)\leq C(1+\lambda)^n,
		$
		which contributes only a polynomial factor, while \eqref{eq:phase-lower}, \cite[Theorem 1.6]{MunteanuWang2012}, and \cite[Theorem 1.1]{CZ} imply
		$
		\Phi_f(\l)\geq c\l^{\frac{n+1}{2}}.
		$
		The exponential decay in \eqref{eq:agmon} therefore proves the last assertion.
		
	\end{proof}

	We are now ready to prove the main result of this section.

	\begin{thm}\label{thm:averaged}
		Let $(M^n,g,f),n\geq3$ be a complete noncompact shrinker.
		Assume that for every fixed $A,C>0$,
		\be\label{eq:averaged-hypothesis}
		\Vol_g\{f<C\lambda,\ \sqrt\lambda\,\rho(x)\leq A\}
		=o(F(\lambda)).
		\ee
		Then the Weyl law \eqref{eq:Weyl law with scalar curvature} holds.
	\end{thm}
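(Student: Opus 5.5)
The plan is to write $N_f(\lambda)=\int_M e_\lambda(x,x)\,\dvol_g(x)$ and show that this integral equals $\Phi_f(\lambda)(1+o(1))$. To do so, we split $M$ into a good region, where \Cref{lem:local-density} gives the pointwise asymptotic, and several bad regions, each shown to contribute $o(\Phi_f(\lambda))$ to both $N_f(\lambda)$ and $\Phi_f(\lambda)$. Throughout, the normalizer is $\Phi_f(\lambda)\ge c\lambda^{n/2}F(\lambda)$ from \eqref{eq:phase-lower}, and every bad region will have volume $o(F(\lambda))$, or $O(\delta)F(\lambda)$ with $\delta$ sent to $0$ at the end. On such sets, the uniform bound $e_\lambda(x,x)\le C\lambda^{n/2}$ from \Cref{prop:projector-bound} and the trivial bound $(\lambda-V_f)_+^{n/2}\le\lambda^{n/2}$ suffice. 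We use \eqref{eq:doubling} and \eqref{eq:sigma-F} freely to replace $F(C\lambda)$ by $O(F(\lambda))$.

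\textbf{Decomposition.} Fix a small $\delta>0$. By \Cref{lem:agmon}, the region $\{f\ge 4(1+\delta/2)\lambda+n\}$ contributes $o(\Phi_f)$ to $N_f$. It contributes nothing to $\Phi_f$, since $V_f\ge (f-n)/4>\lambda$ there. The remaining region lies in $\{f<C_0\lambda\}$ with $C_0=4(1+\delta)+1$. Inside it, we remove the turning layer $\{(1-\delta)\lambda<V_f\le(1+\delta)\lambda\}$, whose volume is $C(\delta+\lambda^{-1/2})F(\lambda)$ by \Cref{lem:turning-layer}. What remains of $\{V_f>(1+\delta)\lambda\}$ contributes nothing to $\Phi_f$. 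For its contribution to $N_f$, we apply the Agmon estimate again: the points there with $f<4(1+\delta/2)\lambda+n$ satisfy $R\ge c\delta\lambda$ (because $f+R>4(1+\delta)\lambda+n$), so \Cref{prop:high-R} applies. We also discard $\{f<C_0\lambda,\ R\ge\epsilon\lambda\}$ by \Cref{prop:high-R} for the same reason.

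\textbf{Handling the small curvature radius.} The delicate point is that \Cref{lem:local-density} needs $\inf_{G_\lambda}\sqrt\lambda\rho\to\infty$, while the hypothesis \eqref{eq:averaged-hypothesis} only controls the volume for each fixed $A$. We bridge this by a diagonal argument. For each $k$, \eqref{eq:averaged-hypothesis} with $A=k$ gives some $\lambda_k$ (chosen increasing) such that $\Vol_g\{f<C_0\lambda,\sqrt\lambda\rho\le k\}\le k^{-1}F(\lambda)$ for all $\lambda\ge\lambda_k$. We then set $A(\lambda)=k$ for $\lambda\in[\lambda_k,\lambda_{k+1})$, so that $A(\lambda)\to\infty$ while $\Vol_g\{f<C_0\lambda,\sqrt\lambda\rho\le A(\lambda)\}=o(F(\lambda))$. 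With this choice we define the good set
\[
G_\lambda=\{V_f\le(1-\delta)\lambda,\ \sqrt\lambda\rho>A(\lambda)\}.
\]
It satisfies the hypotheses of \Cref{lem:local-density}, so, uniformly on $G_\lambda$,
\[
e_\lambda(x,x)=(2\pi)^{-n}\omega_n(\lambda-V_f)^{n/2}+o(\lambda^{n/2}).
\]
Integrating this over $G_\lambda$, whose volume is at most $\sigma(\lambda)=O(F(\lambda))$, produces an error $o(\lambda^{n/2}F(\lambda))=o(\Phi_f)$. Every complementary piece inside $\{f<C_0\lambda\}$ has volume either $o(F(\lambda))$ or at most $C\delta F(\lambda)$, so it contributes $O(\delta)\Phi_f+o(\Phi_f)$ to both $N_f$ and $\Phi_f$. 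Hence $\limsup|N_f/\Phi_f-1|\le C\delta$, and letting $\delta\to0$ completes the argument.

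The main obstacle is this bookkeeping. We must make sure that the pointwise uniformity in \Cref{lem:local-density} is available on a set chosen with $\lambda$-dependent $A(\lambda)$, which the diagonal choice handles. We must also verify that the leftover portion of $\{V_f>(1+\delta)\lambda\}$ below the Agmon threshold is covered by \Cref{prop:high-R}. The assumption $n\ge3$ enters only through \Cref{prop:heat-kernel}, which feeds \Cref{prop:projector-bound}.
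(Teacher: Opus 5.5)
Your proposal is correct and follows the paper's proof essentially step by step: the same split into the good set handled by \Cref{lem:local-density}, the small-$\rho$ bad set, the turning layer, the high-$R$ piece via \Cref{prop:high-R}, and the Agmon region, all normalized by \eqref{eq:phase-lower}; your diagonal choice of $A(\lambda)$ just makes explicit the paper's ``there exists $A_\lambda\to\infty$''. Two harmless slips: since $V_f\ge -n/4$, the trivial bound should read $(\lambda-V_f)_+^{n/2}\le(\lambda+n/4)^{n/2}\le C\lambda^{n/2}$, and the extra removal of $\{f<C_0\lambda,\ R\ge\epsilon\lambda\}$ is unnecessary.
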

	
	\begin{proof}
		Fix a sufficiently small $\delta\in(0,1)$.  By 
		\eqref{eq:averaged-hypothesis}, there exists $A_\lambda\to\infty$ such
		that
		\be\label{eq:volume of bad}
		\Vol_g\{f<8\lambda,\ \sqrt\lambda\,\rho\leq A_\lambda\}
		=o(F(\lambda)),
		\ee
		where $8$ is chosen to satisfy
		$\{V_f\leq(1+\delta)\lambda\}\subset\{f<8\lambda\}$.
		Split $\{V_f\leq (1-\delta)\l\}$ into
		\begin{align*}
			G_\lambda
			&=\{V_f\leq(1-\delta)\lambda,\
			\sqrt\lambda\,\rho>A_\lambda\},\\
			B_\lambda
			&=\{V_f\leq(1-\delta)\lambda,\
			\sqrt\lambda\,\rho\leq A_\lambda\}.
		\end{align*}
		Lemma~\ref{lem:local-density}, Proposition
		\ref{prop:projector-bound}, \eqref{eq:doubling}, and
		\eqref{eq:phase-lower} imply
		\be\label{eq:integrated-local-density}
		\int_{G_\lambda\cup B_\lambda}e_\lambda(x,x)
		\dd\mathrm{vol}_g
		=(2\pi)^{-n}\omega_n\int_{\{V_f\leq(1-\delta)\lambda\}}
		(\lambda-V_f)^{n/2}\dd\mathrm{vol}_g
		+o(\Phi_f(\lambda)).
		\ee
		Indeed, the uniform $o(\lambda^{n/2})$ error on $G_\lambda$ is integrated over a set of volume $O(F(\lambda))$, while, by \eqref{eq:volume of bad}, both the integrals of $e_\l(x,x)$ and $(\l-V_f(x))_+^{n/2}$ over $B_\lambda$ are bounded by
		$
		o(\Phi_f(\lambda)).
		$

		By
		Lemma~\ref{lem:turning-layer} and
		Proposition~\ref{prop:projector-bound},
		\be\ba\label{eq:turning-spectral}
		\int_{\{(1-\delta)\lambda<V_f\leq(1+\delta)\lambda\}}
		e_\lambda(x,x)\dd\mathrm{vol}_g
		&\leq C(\delta+o(1))\Phi_f(\lambda);\\
		\int_{\left\{(1-\delta) \lambda<V_f<(1+\delta)\lambda\right\}}\left(\lambda-V_f\right)^{n / 2} \dvol_g   &\leq(\delta \lambda)^{n / 2} \operatorname{Vol}_g\left\{(1-\delta) \lambda<V_f \leq(1+\delta) \lambda\right\}=O(\delta) \Phi_f(\lambda).
		\ea\ee

		Finally, consider $\{V_f>(1+\delta)\lambda\}$. On the part where
		$f\leq4(1+\delta/2)\lambda+n$, we have
		\[
		R=4V_f+n-f>2\delta\lambda,
		\]
		so Proposition~\ref{prop:high-R} gives an $o(\Phi_f(\lambda))$ spectral
		contribution. The contribution from the complementary part is
		$o(\Phi_f(\lambda))$ by Lemma~\ref{lem:agmon}. Combining this with \eqref{eq:integrated-local-density} and
		\eqref{eq:turning-spectral}, and first letting $\lambda\to\infty$ and then
		$\delta\downarrow0$, we obtain the Weyl law
		\eqref{eq:Weyl law with scalar curvature}.

	\end{proof}
	
	By \Cref{thm:averaged} and \Cref{thm:estimate}, it suffices to prove \Cref{thm:main} for \(n\leq 2\). This follows from the
	classification of Ricci shrinkers in dimensions \(n\leq 2\), together with the
	Weyl law for harmonic oscillators.
	
	\bibliographystyle{plain}
	\bibliography{references}

@article{BDY,
  author        = {Braverman, M. and Dai, X. and Yan, J.},
  title         = {The Classical {W}eyl Law for {S}chr\"odinger Operators on Complete {R}iemannian Manifolds},
  year          = {2026},
  journal = {arXiv:2605.07200}
}

@article{MunteanuWang2012,
  author  = {Munteanu, O. and Wang, J.},
  title   = {Analysis of weighted {L}aplacian and applications to
             {R}icci solitons},
  journal = {Comm. Anal. Geom.},
  volume  = {20},
  number  = {1},
  pages   = {55--94},
  year    = {2012},
  doi     = {10.4310/CAG.2012.v20.n1.a3}
}

@article{MunteanuMuTaoWang,
  author  = {Munteanu, O. and Wang, M.-T.},
  title   = {The curvature of gradient {R}icci solitons},
  journal = {Math. Res. Lett.},
  volume  = {18},
  number  = {6},
  pages   = {1051--1069},
  year    = {2011},
  doi     = {10.4310/MRL.2011.v18.n6.a2}
}

@article{MunteanuJiapingWang,
  author  = {Munteanu, O. and Wang, J.},
  title   = {Geometry of shrinking {R}icci solitons},
  journal = {Compos. Math.},
  volume  = {151},
  number  = {12},
  pages   = {2273--2300},
  year    = {2015},
  doi     = {10.1112/S0010437X15007496}
}

@article{MunteanuWangInfinity,
  author  = {Munteanu, O. and Wang, J.},
  title   = {Structure at infinity for shrinking {R}icci solitons},
  journal = {Ann. Sci. \'Ec. Norm. Sup\'er.},
  volume  = {52},
  number  = {4},
  pages   = {891--925},
  year    = {2019},
  doi     = {10.24033/asens.2400}
}

@article{KotschwarWang,
  author  = {Kotschwar, B. and Wang, L.},
  title   = {Rigidity of asymptotically conical shrinking gradient {R}icci solitons},
  journal = {J. Differential Geom.},
  volume  = {100},
  number  = {1},
  pages   = {55--108},
  year    = {2015}
}

@article{LiLiWangStructure,
  author  = {Li, H. and Li, Y. and Wang, B.},
  title   = {On the structure of {R}icci shrinkers},
  journal = {J. Funct. Anal.},
  volume  = {280},
  number  = {9},
  pages   = {108955},
  year    = {2021},
  doi     = {10.1016/j.jfa.2021.108955}
}

@article{HuangLiWang,
  author  = {Huang, S. and Li, Y. and Wang, B.},
  title   = {On the regular-convexity of {R}icci shrinker limit spaces},
  journal = {J. Reine Angew. Math.},
  volume  = {771},
  pages   = {99--136},
  year    = {2021},
  doi     = {10.1515/crelle-2020-0021}
}

@article{LiWangKahler,
  author  = {Li, Y. and Wang, B.},
  title   = {On {K}{\"a}hler {R}icci shrinker surfaces},
  journal = {Acta Math.},
  volume  = {236},
  number  = {1},
  pages   = {1--50},
  year    = {2026},
  doi     = {10.4310/ACTA.2026.v236.n1.a1}
}

@article{CZ,
  author  = {Cao, H.-D. and Zhou, D.},
  title   = {On Complete Gradient Shrinking {R}icci Solitons},
  journal = {Journal of Differential Geometry},
  volume  = {85},
  year    = {2010},
  pages   = {175--185}
}

@article{GrigoryanGaussian,
  author  = {Grigor'yan, A.},
  title   = {Gaussian Upper Bounds for the Heat Kernel on Arbitrary Manifolds},
  journal = {J. Differential Geom.},
  volume  = {45},
  number  = {1},
  pages   = {33--52},
  year    = {1997},
  doi     = {10.4310/jdg/1214459753}
}

@book{agmon2014lectures,
  author    = {Agmon, S.},
  title     = {Lectures on Exponential Decay of Solutions of Second-Order Elliptic Equations: Bounds on Eigenfunctions of {N}-Body {S}chr\"odinger Operators},
  series    = {Mathematical Notes},
  volume    = {29},
  publisher = {Princeton University Press},
  address   = {Princeton, NJ},
  year      = {1982}
}

@article{DY2020cohomology,
  title={{W}itten deformation for noncompact manifolds with bounded geometry},
  author={Dai, X. and Yan, J.},
  journal={Journal of the Institute of Mathematics of Jussieu},
  volume={22},
  number={2},
  pages={643--680},
  year={2023},
  publisher={Cambridge University Press}
}

@article{CGT,
  author  = {Cheeger, J. and Gromov, M. and Taylor, M.},
  title   = {Finite Propagation Speed, Kernel Estimates for Functions of the {L}aplace Operator, and the Geometry of Complete {R}iemannian Manifolds},
  journal = {Journal of Differential Geometry},
  volume  = {17},
  year    = {1982},
  pages   = {15--53}
}

@article{He,
  author        = {He, F.},
  title         = {Spectral Bounds for {$f$}-{L}aplace-Type Operators with Applications to {B}etti Numbers on Gradient {R}icci Shrinkers},
  year          = {2026},
  journal = {arXiv:2607.21959}
}

@article{LW20,
  author  = {Li, Y. and Wang, B.},
  title   = {Heat Kernel on {R}icci Shrinkers},
  journal = {Calculus of Variations and Partial Differential Equations},
  volume  = {59},
  year    = {2020},
  note    = {Paper No. 194}
}

@article{LWheat,
  author  = {Li, Y. and Wang, B.},
  title   = {Heat Kernel on {R}icci Shrinkers ({II})},
  journal = {Acta Math. Sci. Ser. B (Engl. Ed.)},
  volume  = {44},
  number  = {5},
  pages   = {1639--1695},
  year    = {2024},
  doi     = {10.1007/s10473-024-0502-7}
}

@article{BamlerCompactness,
  author  = {Bamler, R. H.},
  title   = {Compactness Theory of the Space of Super {R}icci Flows},
  journal = {Invent. Math.},
  volume  = {233},
  number  = {3},
  pages   = {1121--1277},
  year    = {2023},
  doi     = {10.1007/s00222-023-01196-3}
}

@article{BamlerStructure,
  author        = {Bamler, R. H.},
  title         = {Structure Theory of Non-Collapsed Limits of {R}icci Flows},
  year          = {2020},
  journal = {arXiv:2009.03243}
}

@article{varopoulos1985hardy,
  title={{H}ardy-{L}ittlewood theory for semigroups},
  author={Varopoulos, N. Th.},
  journal={Journal of functional analysis},
  volume={63},
  number={2},
  pages={240--260},
  year={1985},
  publisher={Academic Press}
}

@article{Shi,
  author  = {Shi, W.},
  title   = {Deforming the Metric on Complete {R}iemannian Manifolds},
  journal = {Journal of Differential Geometry},
  volume  = {30},
  year    = {1989},
  pages   = {223--301}
}

@article{WeylDY,
  author        = {Dai, X. and Yan, J.},
  title         = {{W}eyl Law for {S}chr\"odinger Operators on Noncompact Manifolds, Heat Kernel, and {K}aramata--{H}ardy--{L}ittlewood Theorem},
  year          = {2025},
  journal = {arXiv:2504.15551}
}

@article{ChengZhouDrifted,
  author  = {Cheng, X. and Zhou, D.},
  title   = {Eigenvalues of the drifted {L}aplacian on complete metric measure spaces},
  journal = {Commun. Contemp. Math.},
  volume  = {19},
  number  = {1},
  pages   = {1650001},
  year    = {2017},
  doi     = {10.1142/S0219199716500012}
}

@article{LiZhangZhangEigenvalues,
  author  = {Li, C. and Zhang, H. and Zhang, X.},
  title   = {Rigidity of Eigenvalues of Shrinking {R}icci Solitons},
  journal = {J. Funct. Anal.},
  volume  = {290},
  number  = {9},
  pages   = {111386},
  year    = {2026},
  doi     = {10.1016/j.jfa.2026.111386}
}

@article{ZhangEigenvalueSplitting,
  author  = {Zhang, H.},
  title   = {Characterizing the Splitting of {R}icci Shrinkers via Eigenvalues},
  journal = {J. Geom. Anal.},
  volume  = {36},
  number  = {5},
  pages   = {166},
  year    = {2026},
  doi     = {10.1007/s12220-026-02418-9}
}

@article{ColdingMinicozziModifiedFlow,
  author  = {Colding, T. H. and Minicozzi II, W. P.},
  title   = {Eigenvalue Lower Bounds and Splitting for Modified {R}icci Flow},
  journal = {Adv. Nonlinear Stud.},
  volume  = {24},
  number  = {1},
  pages   = {178--188},
  year    = {2024},
  doi     = {10.1515/ans-2022-0083}
}

@article{ColdingMinicozziGrowth,
  author  = {Colding, T. H. and Minicozzi II, W. P.},
  title   = {Singularities of {R}icci Flow and Diffeomorphisms},
  journal = {Publ. Math. Inst. Hautes \'Etudes Sci.},
  volume  = {142},
  pages   = {75--152},
  year    = {2025},
  doi     = {10.1007/s10240-025-00157-1}
}

@article{ChengConradoPinheiroZhou,
  author        = {Cheng, X. and Conrado, F. and Pinheiro, N. and Zhou, D.},
  title         = {Eigenvalue Estimates for {S}chr\"odinger Operators on {R}icci Shrinkers},
  year          = {2026},
  journal = {arXiv:2605.23199}
}

@article{MaiOuLiouville,
  author  = {Mai, W. and Ou, J.},
  title   = {{L}iouville theorem on {R}icci shrinkers with constant scalar
             curvature and its application},
  journal = {J. Reine Angew. Math.},
  volume  = {810},
  pages   = {283--299},
  year    = {2024},
  doi     = {10.1515/crelle-2024-0021}
}

@article{WuWuPolynomialGrowth,
  author  = {Wu, J.-Y. and Wu, P.},
  title   = {Harmonic and {S}chr\"odinger functions of polynomial growth
             on gradient shrinking {R}icci solitons},
  journal = {Geom. Dedicata},
  volume  = {217},
  number  = {4},
  pages   = {75},
  year    = {2023},
  doi     = {10.1007/s10711-023-00810-1}
}

@article{HeOuHolomorphic,
  author        = {He, F. and Ou, J.},
  title         = {The Dimension of Polynomial Growth Holomorphic Functions and Forms on Gradient {K}\"ahler {R}icci Shrinkers},
  year          = {2024},
  journal = {arXiv:2401.02685}
}

@article{HeOuSections,
  author  = {He, F. and Ou, J.},
  title   = {Dimension Estimate and Existence of Holomorphic Sections
             With Polynomial Growth on Gradient {K}\"ahler {R}icci Shrinkers},
  journal = {Int. Math. Res. Not. IMRN},
  year    = {2025},
  number  = {23},
  pages   = {rnaf350},
  doi     = {10.1093/imrn/rnaf350}
}

@book{SaloffCoste2002,
  author    = {Saloff-Coste, L.},
  title     = {Aspects of {S}obolev-Type Inequalities},
  series    = {London Mathematical Society Lecture Note Series},
  volume    = {289},
  publisher = {Cambridge University Press},
  year      = {2002}
}

@article{HeinNaber,
  author  = {Hein, H.-J. and Naber, A.},
  title   = {New logarithmic {S}obolev inequalities and an
             $\varepsilon$-regularity theorem for the {R}icci flow},
  journal = {Comm. Pure Appl. Math.},
  volume  = {67},
  number  = {9},
  pages   = {1543--1561},
  year    = {2014},
  doi     = {10.1002/cpa.21474}
}
	
\end{document}